\documentclass[a4paper, 12pt]{article}
\usepackage[utf8]{inputenc}
\usepackage[top=2.5cm,bottom=3.0cm,left=2.5cm,right=2.5cm]{geometry}
\usepackage{amssymb,amsmath,amsfonts,amsthm}
\usepackage{graphics}
\usepackage{graphicx}  

\usepackage{color}
\usepackage{soul}

\usepackage{lipsum}

\usepackage{cancel}

\usepackage[pagewise]{lineno} 

\providecommand{\keywords}[1]
{
	\small	
	\textbf{\textit{Keywords---}} #1
}

\newtheorem{theorem}{Theorem}[section]
\newtheorem{lemma}[theorem]{Lemma}

\newtheorem{corollary}[theorem]{Corollary}
\theoremstyle{definition}

\newtheorem{remark}[theorem]{Remark}
 
\newtheorem*{theorem*}{Theorem} 
\newtheorem*{definition*}{Definition}
\newtheorem*{prop*}{Proposition}
\newtheorem*{corollary*}{Corollary}

\newcommand{\C}{\mathbb{C}}

\newcommand{\N}{\mathbb{N}}

\newcommand{\Z}{\mathbb{Z}}

\newcommand{\SP}{\mathbb{S}}

\newcommand{\bm}{\mathbf}

\newcommand{\sub}{\subseteq}

\newcommand{\lav}{\left|}
\newcommand{\rav}{\right|}

\newcommand{\ldav}{\left| \left|}
\newcommand{\rdav}{\right| \right|}

\newcommand{\oo}{\infty}
\newcommand{\lc}{\left(} 
\newcommand{\rc}{\right)}
\newcommand{\lb}{\left[}
\newcommand{\rb}{\right]}
\newcommand{\lfp}{\left\{}
\newcommand{\rfp}{\right\}}
\newcommand{\mc}{\mathcal}

\usepackage{hyperref}
\hypersetup{
	colorlinks,
	citecolor=black,
	filecolor=black,
	linkcolor=black,
	urlcolor=black
}

\title{Dynamical Properties of Perturbed Weighted Shifts} 
\author{A. Messaoudi, J. L. Tofanin Neto, I. Tsokanos}
\date{ }

\begin{document}

	\maketitle

\begin{abstract}

In this paper, sufficient conditions are established for diagonal perturbations of weighted shifts to be supercyclic, hypercyclic, and frequently hypercyclic, extending several classical results for weighted shifts to this more general setting. In addition, a complete analytic characterization of the shadowing property for these operators is obtained. Consequently, examples of frequently hypercyclic perturbed weighted shifts with the shadowing property are constructed.

\end{abstract}

	\noindent\textit{2020 Mathematics Subject Classification:}
Primary 47A16; Secondary 47B37, 37B65.

\smallskip 
\noindent

\keywords{Perturbed weighted shifts, hypercyclicity, supercyclicity, shadowing property.}

\section{Introduction}

A \textit{linear dynamical system} is a pair $(\mathcal{B},T)$, where $\mathcal{B}$ is a Banach space, or more generally, a Fréchet space, and $T:\mathcal{B}\to\mathcal{B}$ is a bounded linear operator. A central objective is to understand the asymptotic behavior of orbits. More precisely, for $\bm{v}\in\mathcal{B}$, its orbit is the set
$O_{T}\lc\bm{v}\rc=\lfp T^{n}\lc\bm{v}\rc:n\in\mathbb{Z}_{+}\rfp,$
where $\mathbb{Z}_{+}$ denotes the nonnegative integers and $T^{n}$ the $n$-th iterate of $T$. Over the past three decades, the theory of linear dynamics has developed substantially, with deep connections to ergodic theory, number theory, and the geometry of Banach spaces; see, for instance, the books \cite{Bayart-Matheron-Dynamics_Of_Linear_Operators, Brezis_Functional_Analysis} and the papers \cite{Bayart-Ruzsa_difference_sets, Bourhim-Spectrum_of_Bilateral_Shifts, Cirilo_Gollobit_Pujals-Dynamics_of_Generalized_Hyperbolic_Linear_Operators, Kitai_invariant_closed_sets}.

\paragraph{}

In this work, the theory of chaotic and shadowing properties for unilateral weighted backward shifts is extended to perturbed weighted shifts. In what follows, given 
two bounded complex sequences $\bar d=(d_k)_{k\ge1}$ and $\bar w=(w_k)_{k\ge1}$ with $\inf_{k\ge1}\lav w_k\rav\neq0$, the associated \textit{perturbed weighted shift} is the operator $T_{\bar d,\bar w}:\ell^{p}(\mathbb N)\longrightarrow\ell^{p}(\mathbb N)$ given by 
\begin{equation}\label{Eq_Perturbed Weighted Shift Definition} 
T_{\bar d,\bar w}=I_{\bar d}+B_{\bar w},
\quad \text{where } B_{\bar w}\lc(x_k)_{k=1}^{+\oo}\rc
=\bigl(w_kx_{k+1}\bigr)_{k=1}^{+\oo} \quad \text{and} \quad 
I_{\bar d}\lc(x_k)_{k=1}^{+\oo}\rc
=(d_kx_k)_{k=1}^{+\oo}.
\end{equation}
Here, $B_{\bar w}$ is the \textit{unilateral weighted backward shift}, $I_{\bar d}$ is the diagonal perturbation and $\ell^{p}(\mathbb N)$ ($1\le p<+\infty$) denotes the Banach space of complex $p$-summable sequences equipped with its usual $p$-norm $\ldav\cdot\rdav_{p}$. Although the perturbed weighted shift is a natural and simple generalization of weighted shifts, studying its dynamic properties is more complex than in the case of weighted shifts

\paragraph{}
A bounded linear operator $T$ on a separable Banach space $\mc{B}$ is said to be \emph{hypercyclic} if it admits a dense orbit in $\mc{B}$. Equivalently, $T$ is \emph{topologically transitive}: for every pair of non-empty open subsets $U,V\subseteq\mc{B}$, there exists $n\geq0$ such that $T^{n}(U)\cap V\neq\varnothing$. The operator $T$ is \emph{frequently hypercyclic} if there exists $\bm{v}\in\mathcal{B}$ such that, for every non-empty open subset $U\subseteq\mathcal{B}$, the set
$N\lc \bm{v},U\rc=\lfp n\in\mathbb{Z}_{+}:T^{n}\lc\bm{v}\rc\in U\rfp$
has positive lower density, that is,
$\underset{n\to+\oo}{\liminf} \,n^{-1}\cdot|N\lc\bm{v},U\rc\cap\{0,\ldots,n-1\}|>0$.
Finally, $T$ is \emph{supercyclic} if there exists $\bm{v}\in\mathcal{B}$ whose projective orbit
$\lfp\lambda T^{n}\lc\bm{v}\rc:n\in\mathbb{Z}_{+},\ \lambda\in\mathbb{C}\rfp$
is dense in $\mathcal{B}$. These three properties are important topics in linear dynamics; see, for instance,
\cite{Bayart_Grivaux_2006, Bayart-Matheron-Dynamics_Of_Linear_Operators,
Bayart-Ruzsa_difference_sets}.

Hilden and Wallen \cite{Hilden_Wallen-Some_Cyclic_and_Non-Cyclic_Vectors_of_Certain_Operators} showed that every unilateral weighted shift $B_{\bar{w}}$ with nonzero weights is supercyclic, whereas necessary and sufficient conditions for hypercyclicity and frequent hypercyclicity were established by Salas \cite{Salas-Hypercyclic_Weighted_Shifts} and Grosse-Erdmann \cite{GE}, and by Bayart and Ruzsa \cite{Bayart-Ruzsa_difference_sets}, respectively. 
More precisely, a unilateral weighted shift $B_{\bar w}$ on $\ell^{p}\lc \N \rc$ with nonzero weights is hypercyclic (respectively, frequently hypercyclic) if and only if
\begin{equation}\label{Eq_Hypercyclicity Weighted Shift}
\limsup_{k \to\infty}|w_{1}\cdots w_{k}|=\infty \qquad \lc \text{respectively}, \quad \sum_{k=1}^{\infty}\frac{1}{|w_{1}\cdots w_{k}|^{p}}<\infty \rc. 
\end{equation} 

Regarding the chaotic behavior of perturbed weighted shifts, Salas \cite{Salas-Hypercyclic_Weighted_Shifts} proved that the operator $I+B_{\bar w}$ on $\ell^{2}(\mathbb N)$ is hypercyclic whenever $\bar w$ is a bounded sequence of positive weights. Grivaux \cite{Gr2} subsequently established that these operators are mixing, while Saavedra and Rodríguez \cite{SR} showed that the operator $I+B_{\bar w}$ satisfies the Hypercyclicity Criterion. Complementary to Salas' result, Grivaux \cite{Grivaux-Sums_of_hypercyclic_operators} studied operators of the form $T_{\bar d,w}=I_{\bar d}+B_w$ on $\ell^{2}(\mathbb N)$, where the diagonal part is allowed to vary while the shift has constant weight $w\in\mathbb C$ (see also \cite{Grivaux_Matheron_Menet-Linear_Dynamical_Systems_on_Hilbert_Spaces}). In particular, $T_{\bar d,w}$ is hypercyclic whenever 
\begin{equation}\label{Eq_Grivaux Sufficient Condition}
|w|>\max\lfp  2||I_{\bar d}||, \; 1+||I_{\bar d}|| \rfp, \quad \text{where } ||I_{\bar d}||=\sup\lfp \lav d_{k} \rav : \; k \ge 1 \rfp . 
\end{equation}

\paragraph{} 
The shadowing property is another fundamental notion in dynamical systems. In the context of linear dynamics, a bounded linear operator $T$ is said to have the \textit{shadowing property} if, for every $\varepsilon>0$, there exists $\delta>0$ such that every $\delta$-pseudo-orbit $\left(\bm{x}_m\right)_{m\in\mathbb{N}}$, that is, a sequence in $\mc{B}$ satisfying  $\| \bm{x}_{m+1}-T \lc \bm{x}_m \rc\|\leq\delta$, $ m\in\mathbb{N}$,  
is $\varepsilon$-shadowed by an orbit; there exists a vector $\bm{v} \in\mathcal{B}$ such that $ \|\bm{x}_m - T^m\lc \bm{v} \rc \|\leq\varepsilon$, for every $m\in\mathbb{N}$.
If $T$ is invertible, then $\mathbb{N}$ may equivalently be replaced by $\mathbb{Z}$; see \cite[Theorem~1]{Bernardes_Peris-On_Shadowing_and_Chain_Recurrence_in_Linear_Dynamics}.

Bernardes and Messaoudi \cite{Bernardes_Messaoudi-Shadowing_and_Structural_Stability} proved that $B_{\bar w}$ has the shadowing property if and only if one of the following conditions holds:
\begin{equation}\label{Eq_Weighted Shifts Shadowing Characterization}
\lim_{l\to\infty}\sup_{k\ge0}|w_{k+1}\cdots w_{k+l}|^{1/l}<1
\qquad\text{or}\qquad
\lim_{l\to\infty}\inf_{k\ge0}|w_{k+1}\cdots w_{k+l}|^{1/l}>1.
\end{equation}

\paragraph{}

The relation between chaotic and shadowing behavior of linear operators has been studied only in recent years. For instance, Antunes et al. \cite{Antunes} proved that an operator on a Banach space possessing both the shadowing property and chain recurrence is frequently hypercyclic and mixing.  
A broad class of frequently hypercyclic operators possessing the shadowing property can be found in \cite{BM,Cirilo_Gollobit_Pujals-Dynamics_of_Generalized_Hyperbolic_Linear_Operators}.

\paragraph{}

This paper establishes sufficient conditions for diagonal perturbations of weighted shifts to be supercyclic, hypercyclic, and frequently hypercyclic, thereby extending several classical results for weighted shifts to this class of operators. In particular, the criteria recover and generalize previous results of Salas, Bayart--Ruzsa, and Grivaux, while also yielding new families of frequently hypercyclic operators on $\ell^{p}(\mathbb N)$. Moreover, a characterization of the shadowing property for perturbed weighted shifts $T_{\bar d,\bar w}$ on $\ell^p(\mathbb N)$ is obtained, extending the existing characterization \eqref{Eq_Weighted Shifts Shadowing Characterization} for unilateral weighted shifts to diagonal perturbations of weighted shifts. As a consequence, examples of perturbed weighted shifts that are both frequently hypercyclic and have the shadowing property are obtained.

The paper is organized as follows. Section \ref{Preliminaries} contains the necessary preliminaries and introduces the notation used throughout the paper. The main results are stated in Section \ref{Sec_Main Results}, and their proofs are given in Sections \ref{Sec_Proof of Hypercyclicity} and \ref{section 5}.

\section{Preliminaries}\label{Preliminaries}

Given a Banach space $\mathcal{B}$ and a bounded linear operator $T:\mathcal{B}\to\mathcal{B}$, the operator norm of $T$ is denoted by $ \|T\|=\sup\lfp\|Tx\|:x\in\mc{B},\ \|x\|=1\rfp$. 
The \emph{spectrum}, \emph{point spectrum}, \emph{approximate point spectrum}, and \emph{surjective spectrum} of $T$ are defined by 
\begin{align*} 
\sigma(T) &= \{\lambda\in\mathbb C:T-\lambda I\text{ is not invertible}\},\\
\sigma_{pt}(T) &= \{\lambda\in\mathbb C:T-\lambda I\text{ is not injective}\},\\ 
\sigma_{ap}(T) &=  \lfp \lambda\in\mathbb C:T-\lambda I\text{ is not bounded below} \rfp,\\ 
\sigma_{sur}(T)&=\lfp \lambda\in\C:\; T-\lambda I \text{ is not surjective} \rfp, 
\end{align*}

\paragraph{}

In Section~\ref{Sec_Proof of Hypercyclicity}, the Supercyclicity Criterion, the Hypercyclicity Criterion, and the Frequent Hypercyclicity Criterion (see, respectively, \cite{Salas-Supercyclicity_and_Weighted_Shifts}, \cite{Kitai_invariant_closed_sets}, and \cite{Bayart_Grivaux_2006}) will be used to establish sufficient conditions for a perturbed weighted shift $T_{\bar d,\bar w}$ to be supercyclic, hypercyclic, and frequently hypercyclic. These criteria are fundamental tools in linear dynamics. The formulation below is tailored to the present setting and suffices for the purposes at hand.

\begin{theorem}[Supercyclicity, Hypercyclicity, and Frequent Hypercyclicity Criteria {\cite{Bayart_Grivaux_2006, Bayart-Matheron-Dynamics_Of_Linear_Operators, Bayart-Ruzsa_difference_sets}}]
\label{Theor_SHFH Criterion}

Let $\mathcal B$ be a separable Banach space and $T:\mathcal B\to\mathcal B$ be a continuous linear operator. Assume that there exist a dense subset $D_0\subseteq\mathcal B$ and a map $S:D_0\to D_0$ such that
\begin{equation}\label{Eq_SHFH Criterion Inverse}
T\circ S( \bm{x} ) = \bm{x} ,
\qquad
\text{for every } \bm{x} \in D_0.
\end{equation}
Then the following assertions hold.

\begin{enumerate}

\item
If there exists a subsequence $(m_n)_{n\ge1}$ such that, for every $\bm{x} , \bm{y} \in D_0$,
\begin{equation}\label{Eq_S Criterion Limit Zero}
\lim_{n\to\infty}
\|T^{m_n} \bm{x} \|\,
\|S^{m_n} \bm{y} \|
=
0,
\end{equation}
then $T$ is supercyclic.

\item
If there exists a subsequence $(m_n)_{n\ge1}$ such that, for every $x\in D_0$,
\begin{equation}\label{Eq_H Criterion LimInf Condition}
\lim_{n\to\infty}\|T^{m_n} \bm{x} \|=0
\quad
\text{and}
\quad
\lim_{n\to\infty}\|S^{m_n} \bm{x} \|=0,
\end{equation}
then $T$ is hypercyclic. 

\item
If, for every $x\in D_0$, the series
\begin{equation}\label{Eq_FH Criterion Convergent Series Condition}
\sum_{m=0}^{\infty}T^m \bm{x} 
\qquad\text{and}\qquad
\sum_{m=0}^{\infty}S^m \bm{x} 
\end{equation}
converge unconditionally in $\mathcal B$, then $T$ is frequently hypercyclic.

\end{enumerate}

\end{theorem}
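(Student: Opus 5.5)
We prove the three assertions separately. In each case we build, from the data $(D_0,S)$, the objects required by the standard argument. The hypothesis \eqref{Eq_SHFH Criterion Inverse} gives $T^{k}S^{m}\bm{x}=S^{m-k}\bm{x}$ for $k\le m$ and $T^{m}S^{m}\bm{x}=\bm{x}$. This identity is the only algebraic input needed. Topological transitivity will be checked on pairs of open sets, and Birkhoff's transitivity theorem then gives hypercyclicity, which is legitimate since $\mathcal B$ is separable.

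For (1), fix nonempty open sets $U,V$ and pick $\bm{x}\in U\cap D_0$ and $\bm{y}\in V\cap D_0$. We may assume $\bm{y}\neq 0$; if $\bm{x}=0$ the conclusion is trivial. Since $D_0$ is dense and $U$ is open, we may also perturb to get $T^{m_n}\bm{x}\neq 0$ unless $T^{m_n}\bm{x}=0$, and the latter case is easier. Set $\lambda_n=\bigl(\|S^{m_n}\bm{y}\|/\|T^{m_n}\bm{x}\|\bigr)^{1/2}$ and $\bm{z}_n=\bm{x}+\lambda_n^{-1}S^{m_n}\bm{y}$. Then
\[
\|\bm{z}_n-\bm{x}\|=\bigl(\|T^{m_n}\bm{x}\|\,\|S^{m_n}\bm{y}\|\bigr)^{1/2}\to0,
\]
and
\[
\lambda_nT^{m_n}\bm{z}_n-\bm{y}=\lambda_nT^{m_n}\bm{x},
\]
whose norm equals the same square root and also tends to $0$. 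Hence the projective orbit map is transitive: for all $U,V$ there are $n$ and $\lambda$ with $\lambda T^{n}(U)\cap V\neq\varnothing$. The usual Baire category argument with a countable base $(V_j)$ then shows that the set of supercyclic vectors, $\bigcap_j\bigcup_{n,\lambda}(\lambda T^n)^{-1}(V_j)$, is a dense $G_\delta$. The degenerate case $\|T^{m_n}\bm{x}\|=0$ is handled by taking $\lambda_n$ large.

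For (2), take $\bm{z}_n=\bm{x}+S^{m_n}\bm{y}$. Then $\bm{z}_n\to\bm{x}$ and $T^{m_n}\bm{z}_n=T^{m_n}\bm{x}+\bm{y}\to\bm{y}$ by \eqref{Eq_H Criterion LimInf Condition}. This gives transitivity, and Birkhoff's theorem gives hypercyclicity.

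Part (3) is the main obstacle. We follow Bayart--Grivaux. Fix a dense sequence $(\bm{y}_j)\subseteq D_0$ and use the existence of pairwise disjoint sets $A_j\subseteq\mathbb{N}$ of positive lower density such that $|n-m|\ge N_j+N_k$ for distinct $n\in A_j$, $m\in A_k$. Here the $N_j$ are chosen so that the tails of the unconditionally convergent series satisfy
\[
\Bigl\|\sum_{m\in F}T^m\bm{y}_k\Bigr\|<\varepsilon_j 2^{-k}, \qquad \Bigl\|\sum_{m\in F}S^m\bm{y}_k\Bigr\|<\varepsilon_j 2^{-k}
\]
for finite $F\subseteq[N_j,\infty)$ and $k\le j$. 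This combinatorial lemma is taken from the literature. Define
\[
\bm{v}=\sum_{j}\sum_{n\in A_j}S^{n}\bm{y}_j,
\]
which converges by unconditionality and the separation. For $n\in A_j$,
\[
T^n\bm{v}-\bm{y}_j=\sum_{(k,m)\neq(j,n)}T^{n}S^{m}\bm{y}_k.
\]
Each term is either $S^{m-n}\bm{y}_k$ with $m-n\ge N_j+N_k$, or $T^{n-m}\bm{y}_k$ with $n-m\ge N_j+N_k$, so the unconditional-tail estimates make the sum at most $\varepsilon_j$. Since $A_j$ has positive lower density and $\bm{y}_j$ approximates any point of a given open set, $\bm{v}$ is frequently hypercyclic. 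The delicate points are the choice of $N_j$ uniformly across $k$, which uses the indices $k>j$ via $N_k$ in the separation, and the existence of the sets $A_j$.
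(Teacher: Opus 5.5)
Your proposal is correct. The paper gives no proof of this theorem; it quotes the criteria from Bayart--Grivaux, Bayart--Matheron and Bayart--Ruzsa, and your sketch reproduces the standard arguments from those sources: projective transitivity plus a Baire argument for (1), $\bm{z}_n=\bm{x}+S^{m_n}\bm{y}$ with Birkhoff's theorem for (2), and the Bayart--Grivaux/Bonilla--Grosse-Erdmann vector $\bm{v}=\sum_j\sum_{n\in A_j}S^{n}\bm{y}_j$ for (3). The only slips are cosmetic: in (3) your tail estimates give a bound of $3\varepsilon_j$ rather than $\varepsilon_j$ (with $(\varepsilon_j)$ decreasing), and the standard combinatorial lemma also provides $n\ge N_j$ for every $n\in A_j$, which makes the convergence of $\bm{v}$ immediate.
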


To formulate the sufficient conditions for supercyclicity, hypercyclicity, and frequent hypercyclicity of perturbed weighted shifts, the following notation will be used. Given the sequence $\bar d=\lc d_k\rc_{k\ge1}$, for $r\in \mathbb{Z}_{+}$ and $s\in\Z$, define 
\begin{equation}\label{Eq_D Quantities Definition} 
		D_{r} \lc s \rc =   \begin{cases}
			0, & \text{if } s < 0 \text{ or } r<s, \\
			1, & \text{if } s=0, \\
			\displaystyle     \lc -1 \rc^{s}   \sum_{1 \le i_{1} < \dots < i_{s} \le r} d_{i_{1}} \cdots d_{i_{s}}  , & \text{if } 1 \le s \le r  \\
		\end{cases}
\end{equation} 
and 
\begin{equation}\label{Eq_Q Quantities Definition}
    Q_{r}\lc s \rc =  \begin{cases}
        0, & \text{if } s < 0, \\ 
        1 ,& \text{if } s = 0, \\ 
        \underset{\substack{s_{1} + \dots + s_{r} = s , \\ s_{1}, \dots, s_{r} \ge 0 }}{\sum} d_{1}^{s_{1} } \cdots d_{r}^{s_{r}}, & \text{if } s \ge 1 \\ 
    \end{cases}   
\end{equation} 
The quantities $D_r\lc s\rc$ and $Q_r\lc s\rc$ are the \textit{signed elementary symmetric polynomials} and \textit{complete symmetric polynomials}, respectively, in the variables $d_1,\dots,d_r$. Their generating functions, understood as formal power series, are
\begin{equation}\label{Eq_Generating Functions of D and Q}
\prod_{k=1}^{r} \lc 1-d_{k} t \rc
=\sum_{s=0}^{r}D_r\lc s\rc t^s
\quad\text{and}\quad
\prod_{k=1}^{r}\lc 1-d_{k} t \rc^{-1}
=\sum_{s=0}^{+\oo}Q_r\lc s\rc t^s,
\end{equation}
see for instance \cite{Macdonald-Symmetric_Functions_and_Hall_Polynomials}.


\section{Main Results}\label{Sec_Main Results}

The first main result establishes sufficient conditions under which a non-injective perturbed weighted shift $T_{\bar{d}, \bar{w}}$, determined by sequences of complex numbers $\bar d=(d_k)_{k\ge1}$ and $\bar w=(w_k)_{k\ge1}$, is supercyclic, hypercyclic, or frequently hypercyclic. To state the result, define
\begin{equation}\label{Eq_W Product} 
	   W_{k} = \begin{cases} 
		1 , & \text{if } k = 0, \\  
		w_{1} \cdots w_{k}, & \text{if } k  \ge 1 .\\ 
	\end{cases}  
\end{equation}

\begin{theorem}\label{Theor_H and FH for Perturbed Weighted Shifts}
Let $T_{\bar d,\bar w}:\ell^{p}(\mathbb N)\to\ell^{p}(\mathbb N)$ be a bounded perturbed weighted shift. Assume that
\begin{equation}\label{Eq_Bounded Inverse Vector}
\sum_{k=1}^{\infty}
\frac{\left|D_{k-1}(k-m)\right|^p}
     {\left|W_{k-1}\right|^p}
<\infty,
\qquad
\text{for every }m\ge1,
\end{equation}
and that, for every $n\ge1$, the series of vectors 
\begin{equation}\label{Eq_Dense Kernels Sufficient Condition}
\sum_{m=1}^{\infty} \lc Q_n(m-n)\,  \lc \dfrac{ D_{k-1}\lc k-m \rc } {W_{k-1}}  \rc_{k=1}^{+\oo} \rc
\quad\text{converges in }\ell^{p}(\mathbb N),
\end{equation}
where the quantities $D_{r}\lc s \rc$, $Q_{r}\lc s\rc$, and $W_{k}$ are defined in \eqref{Eq_D Quantities Definition}, \eqref{Eq_Q Quantities Definition}, and \eqref{Eq_W Product}, respectively. 
Then $T_{\bar d,\bar w}$ is supercyclic. 
Moreover, the following assertions hold:

\begin{enumerate}
\item Assume, furthermore, that 
\begin{equation}\label{Eq_Hypercyclic Sufficient Condition Liminf}
 \liminf_{m\to \infty} \sum_{k=1}^{\infty}
\frac{\left|D_{k-1}(k-m)\right|^p}
     {\left|W_{k-1}\right|^p}
=0.
\end{equation}
Then the operator $T_{\bar{d}, \bar{w}}$ is hypercyclic. 
Moreover, if $T_{\bar{d}, \bar{w}}$ is hypercyclic and the vectors $ \lc W_{m-1} \lc \dfrac{ D_{k-1}\lc k-m \rc } {W_{k-1}}  \rc_{k=1}^{+\oo} \rc_{m\ge1}$ form a Schauder basis of $\ell^{p}(\mathbb N)$, then condition~\eqref{Eq_Hypercyclic Sufficient Condition Liminf} holds.

\item Assume, furthermore, that the series of vectors 
\begin{equation}\label{Eq_Freq Hypercyclic Sufficient Condition Series Convergence}
\sum_{m=1}^{\infty} \lc \dfrac{ D_{k-1}\lc k-m \rc } {W_{k-1}}  \rc_{k=1}^{+\oo} 
\quad\text{converges unconditionally in }\ell^{p}(\mathbb N) . 
\end{equation}
Then the operator $T_{\bar{d}, \bar{w}}$ is frequently hypercyclic and $ \SP^{1} \sub \sigma_{pt}\lc T_{\bar{d}, \bar{w}} \rc$. 
In particular, if 
\begin{equation}\label{Eq_Strong Condition for FH}
\sum_{k=1}^{+\infty}
\left|
\frac{\prod_{j=1}^{k}(1+|d_j|)}
     {w_1\cdots w_k}
\right|^p
<+\infty,
\end{equation}
then $T_{\bar d,\bar w}$ is frequently hypercyclic.
\end{enumerate}
\end{theorem}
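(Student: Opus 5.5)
The plan is to apply Theorem~\ref{Theor_SHFH Criterion} with a dense set built from generalized kernel vectors of $T_{\bar d,\bar w}$. For $m\ge1$ put
$$\bm f_m=\lc \frac{D_{k-1}(k-m)}{W_{k-1}}\rc_{k=1}^{+\oo},\qquad \bm f_0=\bm 0 .$$
Condition \eqref{Eq_Bounded Inverse Vector} says precisely that $\bm f_m\in\ell^p(\N)$. By definition \eqref{Eq_D Quantities Definition}, $\bm f_m$ vanishes in coordinates $k<m$ and has entry $1/W_{m-1}$ at $k=m$.

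\emph{Step 1 (the action of $T$ on the $\bm f_m$).} The generating function \eqref{Eq_Generating Functions of D and Q} gives the recursion
$$D_k(s)=D_{k-1}(s)-d_kD_{k-1}(s-1).$$
Using it, the $k$-th coordinate of $T_{\bar d,\bar w}\bm f_m$ is
$$\frac{d_kD_{k-1}(k-m)+D_k(k+1-m)}{W_{k-1}}=\frac{D_{k-1}(k-(m-1))}{W_{k-1}} .$$
Hence $T_{\bar d,\bar w}\bm f_m=\bm f_{m-1}$, and in particular $T^{j}\bm f_m=\bm 0$ for $j\ge m$. Let $D_0=\operatorname{span}\{\bm f_m:m\ge1\}$ and define $S\bm f_m=\bm f_{m+1}$, extended linearly. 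This is well defined because the $\bm f_m$ are linearly independent (triangular structure), and $T\circ S=\mathrm{id}$ on $D_0$.

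\emph{Step 2 (density of $D_0$).} The matrix expressing $\bm f_m$ in terms of $(\bm e_k)$ is lower triangular. By \eqref{Eq_Generating Functions of D and Q}, the $D$'s and $Q$'s are inverse power series, so the formal inverse expresses each $\bm e_n$, up to the nonzero factor $W_{n-1}$, as $\sum_m Q_n(m-n)\bm f_m$. Hypothesis \eqref{Eq_Dense Kernels Sufficient Condition} says this series converges in $\ell^p$, and checking coordinates identifies its sum with a multiple of $\bm e_n$. Therefore $\overline{D_0}\supseteq\{\bm e_n\}$ and $D_0$ is dense.

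\emph{Step 3 (supercyclicity).} Every $\bm x\in D_0$ satisfies $T^m\bm x=\bm 0$ for large $m$, so the product in \eqref{Eq_S Criterion Limit Zero} vanishes eventually. Part~1 of Theorem~\ref{Theor_SHFH Criterion} then gives supercyclicity.

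\emph{Hypercyclicity.} Choose $m_n$ with $\|\bm f_{m_n}\|_p\to0$, which is possible by \eqref{Eq_Hypercyclic Sufficient Condition Liminf}. Since $\bm f_{m_n-j}=T^j\bm f_{m_n}$, we get $\|\bm f_{m_n-j}\|_p\le\|T\|^j\|\bm f_{m_n}\|_p\to0$ for each fixed $j$. Now prove transitivity directly. Take $\bm x=\sum_{j\le J}a_j\bm f_j$ and $\bm y=\sum_{j\le J}b_j\bm f_j$ in $D_0$, set $N=m_n-J$, and let $\bm z=\bm x+\sum_j b_j\bm f_{j+N}$. Then $\|\bm z-\bm x\|\to0$ and $T^N\bm z=\bm y$.

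For the converse, suppose $(W_{m-1}\bm f_m)$ is a Schauder basis and $\|\bm f_m\|_p\ge c>0$ for all large $m$. Expand a hypercyclic vector in this basis and use $T(W_{m-1}\bm f_m)=w_{m-1}\,W_{m-2}\bm f_{m-1}$, so that $T$ acts as a weighted backward shift on coefficients. One then shows that $T^n\bm v$ cannot approach $\bm f_1$. This step needs a uniform bound on the basis projections, and I expect it to be somewhat delicate.

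\emph{Frequent hypercyclicity.} Unconditional convergence of $\sum_m\bm f_m$ gives unconditional convergence of $\sum_m S^m\bm f_j=\sum_{m>j}\bm f_m$. The series $\sum_m T^m\bm x$ is a finite sum for $\bm x\in D_0$. Part~3 of Theorem~\ref{Theor_SHFH Criterion} applies.

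For $|\lambda|=1$, the vector $\bm v_\lambda=\sum_m\lambda^{m-1}\bm f_m$ converges, since unconditional convergence is preserved under bounded multipliers. It satisfies $T\bm v_\lambda=\lambda\bm v_\lambda$, and $\bm v_\lambda\ne\bm 0$ because its first coordinate is $1$. Hence $\SP^1\sub\sigma_{pt}(T_{\bar d,\bar w})$.

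For \eqref{Eq_Strong Condition for FH}, note that $|D_{k-1}(k-m)|$ is at most the elementary symmetric polynomial of $|d_1|,\dots,|d_{k-1}|$ of degree $k-m$. Summing over $m$ gives
$$\sum_m|(\bm f_m)_k|\le\frac{\prod_{j<k}(1+|d_j|)}{|W_{k-1}|},$$
and this coordinatewise domination by an $\ell^p$ vector yields unconditional convergence.

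\emph{Main obstacle.} In the last case I must still verify density, i.e. \eqref{Eq_Dense Kernels Sufficient Condition}, which is not obviously implied when $|d_j|\ge1$. I would first try a duality argument: if $\varphi\in\ell^{p'}$ annihilates every $\bm f_m$, then it annihilates every vector $\bm v_\lambda$, and the map $\lambda\mapsto\varphi(\bm v_\lambda)$ extends to a power series vanishing on the unit circle, which should force $\varphi=0$. This density step is where I expect the main difficulty.
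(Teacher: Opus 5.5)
Most of your plan coincides with the paper's proof; your $\bm f_m$ are the paper's $\bm v_m$. Two deviations are valid shortcuts: you define $S$ abstractly by $S\bm f_m=\bm f_{m+1}$ on the span, instead of via \eqref{Eq_Inverse Operator Definition}, and you prove transitivity directly rather than citing the Hypercyclicity Criterion.

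\textbf{The gap: the converse in item~1.} You leave this unproved, and it does not need any uniform bound on basis projections. Write a hypercyclic vector as $\bm x_h=\sum_m a_mW_{m-1}\bm f_m$, and take $n_j\to\infty$ with $T^{n_j}\bm x_h\to\bm e_1$.
\begin{itemize}
\item By continuity, $T^{n}\bm x_h=\sum_{m>n}a_mW_{m-1}\bm f_{m-n}$.
\item $\bm f_1$ is the only $\bm f_m$ with a nonzero first coordinate, and that coordinate equals $1$. So the first coordinate of $T^{n}\bm x_h$ is exactly $a_{n+1}W_n$, and hence $a_{n_j+1}W_{n_j}\to1$.
\item The terms of the convergent series $\sum_m a_mW_{m-1}\bm f_m$ tend to $\bm 0$. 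In particular $a_{n_j+1}W_{n_j}\bm f_{n_j+1}\to\bm 0$.
\item Combining the last two points gives $\|\bm f_{n_j+1}\|_p\to0$, which is \eqref{Eq_Hypercyclic Sufficient Condition Liminf}.
\end{itemize}
This is essentially the paper's argument; the paper obtains $a_{n_j+1}W_{n_j}\to1$ from uniqueness of the basis expansion.

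\textbf{Your ``main obstacle'' is a misreading.} Conditions \eqref{Eq_Bounded Inverse Vector} and \eqref{Eq_Dense Kernels Sufficient Condition} are standing hypotheses of the whole theorem, including the final \emph{in particular} clause of item~2. The paper only shows \eqref{Eq_Strong Condition for FH}$\,\Rightarrow\,$\eqref{Eq_Freq Hypercyclic Sufficient Condition Series Convergence}, by exactly your domination argument, so that part of your proof is already complete. Your duality sketch is circular as written anyway. If $\varphi$ annihilates every $\bm f_m$, then $\lambda\mapsto\varphi(\bm v_\lambda)=\sum_m\lambda^{m-1}\varphi(\bm f_m)$ is the zero series, and its vanishing carries no information about $\varphi$.

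\textbf{A small imprecision in Step~2.} $Q_n$ and $D_{k-1}$ involve different sets of variables, so ``inverse power series'' does not by itself give $\sum_{m=n}^{k}Q_n(m-n)D_{k-1}(k-m)=0$ for $k>n$. What gives it is that, for $k>n$, the product of the two generating functions is $\prod_{j=n+1}^{k-1}(1-d_jt)$. This polynomial has degree $k-1-n<k-n$, so its coefficient of $t^{k-n}$ vanishes.
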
 

To facilitate the interpretation of the hypotheses of Theorem~\ref{Theor_H and FH for Perturbed Weighted Shifts}, denote the vectors appearing in the statement of the theorem by
\begin{equation}\label{Eq_Kernel Basis Vectors} 
\bm{v}_{m}
:=
\lc
\dfrac{D_{k-1}\lc k-m\rc}{W_{k-1}}
\rc_{k=1}^{+\oo},
\qquad
m\ge1.
\end{equation}
For instance,
$$
\bm{v}_{1}
=
\lc
1,
-\dfrac{d_{1}}{w_{1}},
\dfrac{d_{1}d_{2}}{w_{1}w_{2}},
-\dfrac{d_{1}d_{2}d_{3}}{w_{1}w_{2}w_{3}},
\dots
\rc, \quad 
\bm{v}_{2}
=
\lc
0,
\dfrac{1}{w_{1}},
-\dfrac{d_{1}+d_{2}}{w_{1}w_{2}},
\dfrac{d_{1}d_{2}+d_{1}d_{3}+d_{2}d_{3}}{w_{1}w_{2}w_{3}},
\dots
\rc,
$$
and, in general,
$$
\bm{v}_m
=
\lc
\underbrace{0,\ldots,0}_{m-1},
\dfrac{1}{w_1\cdots w_{m-1}},
-\sum_{i=1}^{m}\dfrac{d_i}{w_1\cdots w_m},
\sum_{1\le i_1<i_2\le m+1}
\dfrac{d_{i_1}d_{i_2}}{w_1\cdots w_{m+1}},
\ldots
\rc.
$$
In terms of these vectors, condition~\eqref{Eq_Bounded Inverse Vector} ensures that $\bm{v}_m\in\ell^p(\mathbb N)$ for every $m\ge1$, whereas condition~\eqref{Eq_Dense Kernels Sufficient Condition} requires the convergence in $\ell^p(\mathbb N)$ of
$\sum_{m=1}^{\infty}Q_n(m-n)\bm{v}_m$ for every $n\ge1$.

\begin{remark}
    In the proof of the theorem, frequent hypercyclicity is established by applying the Frequent Hypercyclicity Criterion. Every operator satisfying this criterion is also Devaney chaotic; that is, it is hypercyclic and has a dense set of periodic points.
\end{remark}

\begin{remark}
The perturbed weighted shifts $T_{\bar d,\bar w}$ covered by Theorem~\ref{Theor_H and FH for Perturbed Weighted Shifts} are non-injective. Indeed, condition~\eqref{Eq_Bounded Inverse Vector} implies that $\bm{v}_{1}\in\ell^{p}\lc\N\rc$, and it is readily verified that $\bm{v}_{1}$ spans the kernel of $T_{\bar d,\bar w}$.
\end{remark}

\begin{remark}
When $\bar d$ is the zero sequence $\bar{\mathbf 0}=(0)_{n\ge1}$, then
$$\bm v_m=\lc
\underbrace{0,\ldots,0}_{m-1},
\dfrac{1}{w_1\cdots w_{m-1}},
0,0,\ldots
\rc$$
and Theorem~\ref{Theor_H and FH for Perturbed Weighted Shifts} reduces to the classical results for unilateral weighted shifts. In particular, it recovers the fact that every unilateral weighted shift is supercyclic \cite{Salas-Supercyclicity_and_Weighted_Shifts}, as well as the classical criteria of Salas, Bayart, and Ruzsa for hypercyclicity and frequent hypercyclicity \cite{Salas-Hypercyclic_Weighted_Shifts,Bayart-Ruzsa_difference_sets}. Indeed, in this case, assumptions~\eqref{Eq_Bounded Inverse Vector} and~\eqref{Eq_Dense Kernels Sufficient Condition} are automatically satisfied, while conditions~\eqref{Eq_Hypercyclic Sufficient Condition Liminf} and~\eqref{Eq_Strong Condition for FH} yield precisely the classical criteria of Salas and Bayart--Ruzsa stated in~\eqref{Eq_Hypercyclicity Weighted Shift}.
\end{remark}

\paragraph{}

The following corollary of Theorem~\ref{Theor_H and FH for Perturbed Weighted Shifts} provides a convenient sufficient condition for $T_{\bar d,\bar w}$ to be frequently hypercyclic. Let
\begin{equation}\label{Eq_Cor Inverse Operator Norm Bound}
c_{\bar d,\bar w}
:=
\sup_{k\ge1}\frac{1}{|w_k|}
+
\sum_{l=1}^{\infty}
\sup_{k\ge1}
\left|
\frac{d_{k+1}\cdots d_{k+l}}
{w_k w_{k+1}\cdots w_{k+l}}
\right|.
\end{equation}
Recall that $\ldav I_{\bar d} \rdav =\sup_{k\ge1}|d_k|$.

\begin{corollary}\label{Cor_Perturbed Weighted Shift Sufficient Condition}
Let $T_{\bar d,\bar w}:\ell^{p}(\mathbb N)\to\ell^{p}(\mathbb N)$ be a perturbed weighted shift. If
\begin{equation}\label{Eq_Cor Perturbed Weighted Shift Sufficient Condition}
c_{\bar d,\bar w}
<
\min\lfp \frac{1}{ \ldav I_{\bar d} \rdav } , 1 \rfp,
\end{equation}
then $T_{\bar d,\bar w}$ is frequently hypercyclic. In particular, if
\begin{equation}\label{Eq_Cor Perturbed Weighted Shift Strong Sufficient Condition}
\max\lfp
2\sup_{k\ge1}\lav d_k\rav,
1+\sup_{k\ge1}\lav d_k\rav
\rfp
<
\inf_{k\ge1}\lav w_k\rav,
\end{equation}
then $T_{\bar d, \bar w}$ is frequently hypercyclic. 
\end{corollary}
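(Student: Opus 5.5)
The plan is to verify the Frequent Hypercyclicity Criterion (part 3 of Theorem~\ref{Theor_SHFH Criterion}) directly. I will not go through conditions \eqref{Eq_Bounded Inverse Vector}--\eqref{Eq_Freq Hypercyclic Sufficient Condition Series Convergence}; instead I build an explicit bounded right inverse $S$ of $T:=T_{\bar d,\bar w}$ with $\|S\|\le c_{\bar d,\bar w}$. Write $\delta:=\ldav I_{\bar d}\rdav$.

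\textbf{Step 1: construct $S$.} Let $F$ be the forward weighted shift $F\bm e_k=w_k^{-1}\bm e_{k+1}$. Then $B_{\bar w}F=I$ and $\|F\|=\sup_k|w_k|^{-1}$, so $TF=I+I_{\bar d}F$. The operator $(I_{\bar d}F)^l$ is a weighted forward shift by $l$ steps, and
\[
F(I_{\bar d}F)^l\bm e_k=\frac{d_{k+1}\cdots d_{k+l}}{w_kw_{k+1}\cdots w_{k+l}}\,\bm e_{k+l+1}.
\]
Hence $\|F(I_{\bar d}F)^l\|$ equals the $l$-th supremum in \eqref{Eq_Cor Inverse Operator Norm Bound}. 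Define
\[
S:=\sum_{l\ge0}(-1)^lF(I_{\bar d}F)^l .
\]
This series converges absolutely, and $\|S\|\le c_{\bar d,\bar w}<1$.

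Next compute $T$ applied to the partial sums. The sum telescopes to $I\pm(I_{\bar d}F)^{L+1}$. The remainder has norm at most $\delta\,\|F(I_{\bar d}F)^{L}\|$, which tends to $0$. Therefore $TS=I$.

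\textbf{Step 2: choose $D_0$.} Take $D_0=\bigcup_{n\ge1}\ker T^n$. Since $T^{n+1}S\bm x=T^n\bm x$, the map $S$ sends $\ker T^n$ into $\ker T^{n+1}$, so $S(D_0)\subseteq D_0$. For $\bm x\in D_0$ the series $\sum_m T^m\bm x$ is a finite sum. The series $\sum_m S^m\bm x$ converges absolutely because $\|S\|<1$. Both series therefore converge unconditionally.

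\textbf{Step 3: density of $D_0$ (the main obstacle).} This is the step that uses the bound $c_{\bar d,\bar w}<1/\delta$; if $\bar d=0$ it is trivial. Fix $\bm x$ supported in $\{1,\dots,N\}$. Since $TS=I$, we have $T^n(I-S^nT^n)=0$, so $\bm x-S^nT^n\bm x\in\ker T^n$.

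The operator $T$ leaves $\mathrm{span}\{\bm e_1,\dots,\bm e_N\}$ invariant. On this subspace it is upper triangular with diagonal $d_1,\dots,d_N$, so its spectral radius there is at most $\delta$. Fix $\varepsilon>0$ with $c_{\bar d,\bar w}(\delta+\varepsilon)<1$. Then
\[
\|S^nT^n\bm x\|\le C_\varepsilon\, c_{\bar d,\bar w}^{\,n}(\delta+\varepsilon)^n\|\bm x\|\to0 .
\]
Thus every finitely supported vector lies in $\overline{D_0}$, so $D_0$ is dense, and the Frequent Hypercyclicity Criterion applies.

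\textbf{Step 4: the particular case.} Put $a=\inf_k|w_k|$ and assume \eqref{Eq_Cor Perturbed Weighted Shift Strong Sufficient Condition}. Then $\delta<a$, and
\[
c_{\bar d,\bar w}\le \frac1a+\sum_{l\ge1}\frac{\delta^l}{a^{l+1}}=\frac1{a-\delta}.
\]
The condition $a>\max\{1+\delta,2\delta\}$ says exactly that $a-\delta>1$ and $a-\delta>\delta$. Hence $\frac1{a-\delta}<\min\{1,1/\delta\}$, so \eqref{Eq_Cor Perturbed Weighted Shift Sufficient Condition} holds and the first part applies.
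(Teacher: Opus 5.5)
Your proof is correct, but it takes a different route from the paper.

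\textbf{The paper's route.} The paper deduces the corollary from Theorem~\ref{Theor_H and FH for Perturbed Weighted Shifts}. It checks that $\bm v_1\in\ell^p$. Using Lemma~\ref{Lem_Norm of S}, it bounds $\|\bm v_m\|_p=\|S_{\bar d,\bar w}^{m-1}\bm v_1\|_p\le c_{\bar d,\bar w}^{m-1}\|\bm v_1\|_p$, which gives absolute convergence of $\sum_m\bm v_m$. Density of $D_0$ comes from the explicit expansion $\bm e_n=W_{n-1}\sum_m Q_n(m-n)\bm v_m$ of Lemma~\ref{Lem_Density Kernel Vectors}. That expansion is proved through the generating functions of the symmetric polynomials $D_r$ and $Q_n$, and is combined with the bound $|Q_n(m-n)|\le\binom{m-1}{n-1}\|I_{\bar d}\|^{m-n}$.

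\textbf{Your route.} You write the same operator $S_{\bar d,\bar w}$ as the Neumann-type series $\sum_l(-1)^lF(I_{\bar d}F)^l$. This gives $\|S\|\le c_{\bar d,\bar w}$ at once, and $TS=I$ on all of $\ell^p$ by telescoping rather than by a coordinate computation. You then replace the symmetric-function combinatorics by two observations: $I-S^nT^n$ maps into $\ker T^n$, and the upper-triangular compression of $T$ to $\operatorname{span}\{\bm e_1,\dots,\bm e_N\}$ satisfies a Gelfand-formula bound.

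\textbf{How the two density arguments relate.} They use the same approximants. Since $S$ is bounded and $T\bm v_{m+1}=\bm v_m$, $S\bm v_m=\bm v_{m+1}$, the vector $\bm e_n-S^MT^M\bm e_n$ is exactly the $M$-th partial sum $W_{n-1}\sum_{m\le M}Q_n(m-n)\bm v_m$ of the paper's expansion. You estimate the remainder $S^MT^M\bm e_n$ in one stroke, and your constant $C_\varepsilon$ absorbs the polynomial factor $\binom{m-1}{n-1}$.

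\textbf{What each approach buys.} The paper's route places the corollary inside a theorem whose hypotheses make sense even when $S_{\bar d,\bar w}$ is unbounded, and it yields an explicit expansion of each $\bm e_n$. Your route is self-contained and coordinate-free. It avoids $\bm v_m$, $D_r$, $Q_n$ and condition~\eqref{Eq_Bounded Inverse Vector} altogether. It also uses $\|S^n\|$ and $\|T^n|_{\operatorname{span}\{\bm e_1,\dots,\bm e_N\}}\|$ only through their exponential growth rates. So, assuming $c_{\bar d,\bar w}<\infty$, the same argument works with $c_{\bar d,\bar w}$ in \eqref{Eq_Cor Perturbed Weighted Shift Sufficient Condition} replaced by the spectral radius of $S$.
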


Corollary~\ref{Cor_Perturbed Weighted Shift Sufficient Condition} extends Grivaux's result \cite{Grivaux-Sums_of_hypercyclic_operators} on the hypercyclicity of operators of the form $T_{\bar d,w}$ on $\ell^{2}(\mathbb N)$. Indeed, condition~\eqref{Eq_Grivaux Sufficient Condition} directly implies condition~\eqref{Eq_Cor Perturbed Weighted Shift Strong Sufficient Condition}. Moreover, the conclusion of Corollary~\ref{Cor_Perturbed Weighted Shift Sufficient Condition} holds on $\ell^{p}\lc\N\rc$ for every $1\le p<\infty$.

\paragraph{}

The second main result provides an analytic characterization of the shadowing property for a perturbed weighted shift $T_{\bar d,\bar w}$ on $\ell^{p}(\mathbb N)$ in terms of the defining sequences $\bar d$ and $\bar w$.

\begin{theorem}\label{Theor_Shadowing Petrurbed Weighted Shifts}
Let $T_{\bar d,\bar w}:\ell^{p}(\mathbb N)\to\ell^{p}(\mathbb N)$, with $1\le p<+\infty$, be a perturbed weighted shift. Then $T_{\bar d,\bar w}$ has the shadowing property if and only if
\begin{equation}\label{Eq_Theor Analytic Characterization of Shadowing}
1\notin
\bigl[r_{inf}(\lambda),r_{sup}(\lambda)\bigr]
\quad\text{for every }\lambda\in\mathbb S^{1},
\end{equation}
where
\begin{equation}\label{Eq_Shadowing Inf Bound} 
		r_{inf}\lc \lambda \rc = \lim_{l \to \infty}\inf_{k \ge 0} \lav \dfrac{\lc d_{k+1}-\lambda  \rc \cdots \lc d_{k+l}-\lambda \rc}{w_{k+1} \cdots w_{k+l}}   \rav^{\frac{1}{l}}   
	\end{equation}
	and    
	\begin{equation}\label{Eq_Shadowing Sup Bound} 
		r_{sup}(\lambda)=\lim_{l \to \infty}\sup_{k\ge 0} \lav \dfrac{\lc d_{k+1}-\lambda  \rc \cdots \lc d_{k+l}-\lambda \rc} {w_{k+1} \cdots w_{k+l}}\rav^{\frac{1}{l}}. 
	\end{equation} 
    
\end{theorem}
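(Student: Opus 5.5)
The plan is to reduce the shadowing property of $T=T_{\bar d,\bar w}$ to a spectral statement and then compute the relevant spectra in terms of $r_{inf}$ and $r_{sup}$. The spectral statement is the known one: an operator on a Banach space has the shadowing property if and only if it is generalized hyperbolic. Equivalently, for every $\lambda\in\mathbb S^1$, $T-\lambda I$ is bounded below, or $T-\lambda I$ is surjective, and the splitting is consistent. I would work with a more usable pointwise version. For each unimodular $\lambda$, the bidiagonal operator $T-\lambda I=I_{\bar d-\lambda}+B_{\bar w}$ is itself a \lq\lq weighted shift type\rq\rq\ operator. I expect to show the following:
\begin{itemize}
\item[(a)] if $r_{sup}(\lambda)<1$, then $T-\lambda I$ is surjective with a bounded right inverse given by a Neumann-type series;
\item[(b)] if $r_{inf}(\lambda)>1$, then $T-\lambda I$ is bounded below (indeed injective with closed range);
\item[(c)] if $1\in[r_{inf}(\lambda),r_{sup}(\lambda)]$, then $\lambda\in\sigma_{ap}(T)\cap\sigma_{sur}(T)$, so shadowing fails.
\end{itemize}

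For (a) and (b) I would write $(T-\lambda)x=y$ as the recursion $(d_k-\lambda)x_k+w_kx_{k+1}=y_k$. Solving forwards gives $x_{k+1}=(y_k-(d_k-\lambda)x_k)/w_k$. Solving backwards, $x_k$ is an expression involving the products $\prod(d_j-\lambda)/\prod w_j$ over blocks $[k+1,k+l]$. The limits defining $r_{inf},r_{sup}$ exist by sub/super-multiplicativity (Fekete). When $r_{sup}<1$, the block products decay uniformly and geometrically, so the series $x_k=\sum_l (-1)^l\frac{(d_{k+1}-\lambda)\cdots(d_{k+l}-\lambda)}{w_k\cdots w_{k+l}}y_{k+l}$ defines a bounded operator on $\ell^p$ by Young/Schur-type estimates. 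This operator is a right inverse, which proves (a). When $r_{inf}>1$, the reciprocal products decay uniformly. This requires $d_j\ne\lambda$ eventually in every long block, which is forced by $r_{inf}>1$. The same estimate then bounds $\|x\|\le C\|(T-\lambda)x\|$, which proves (b).

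Next comes the global consistency needed for generalized hyperbolicity. Since $\mathbb S^1$ is compact, $\lambda\mapsto r_{inf}(\lambda)$ is upper semicontinuous and $r_{sup}$ is likewise well behaved. I would therefore argue that under the hypothesis the circle splits into an arc (or union of arcs) where $r_{sup}<1$ and one where $r_{inf}>1$. For the \lq\lq if\rq\rq\ direction I would avoid fine structure. Instead I would use the characterization of shadowing through $\lambda\notin\sigma_{ap}\cap\sigma_{sur}$ for all $\lambda\in\mathbb S^1$, in the form that appears in Bernardes--Messaoudi-type arguments for weighted shifts. Alternatively, I would adapt the direct proof of the weighted shift case \eqref{Eq_Weighted Shifts Shadowing Characterization} by constructing shadowing orbits explicitly via the inverse formulas above. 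I expect that condition \eqref{Eq_Theor Analytic Characterization of Shadowing} at each $\lambda$ yields a uniform splitting. The hyperbolic decomposition $\ell^p=M\oplus N$, with $T|_M$ contracting and $T$ expanding on $N$ up to a surjection, would then be obtained via Riesz projections along arcs separating the two regimes.

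The main obstacle is (c), the necessity direction, when $r_{inf}(\lambda)\le1\le r_{sup}(\lambda)$. Here I would construct, for each $n$, finitely supported unit vectors $x$ on a long block where the normalized product is close to $1$ in both directions, so that $\|(T-\lambda)x\|$ is small. This places $\lambda$ in $\sigma_{ap}$. Dually, working on the adjoint (a forward bidiagonal operator on $\ell^{p'}$) gives $\lambda\in\sigma_{sur}$. The delicate point is the mixed case $r_{inf}<1<r_{sup}$. There one must locate blocks where partial products rise and then fall, and use an intermediate-value argument on $k\mapsto|\prod_{j\le k}(d_j-\lambda)/w_j|$ to find long windows of near-neutral growth. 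I would then construct a $\delta$-pseudo-orbit concatenating these windows that no true orbit can shadow, mimicking the proof for $d\equiv0$.
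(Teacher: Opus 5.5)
\textbf{There is a genuine gap at the first step.} The equivalence ``shadowing $\Leftrightarrow$ generalized hyperbolicity'' is not an available theorem for operators on general Banach spaces; only GH $\Rightarrow$ shadowing is standard. The working criteria you extract from it are false in general: ``at each $\lambda\in\mathbb S^1$, $T-\lambda I$ is bounded below or surjective'', and ``$\lambda\notin\sigma_{ap}(T)\cap\sigma_{sur}(T)$''. Take $2F$, with $F$ the unweighted forward shift on $\ell^p(\mathbb N)$. Then $\|(2F-\lambda I)\mathbf x\|_p\ge\|\mathbf x\|_p$ for $|\lambda|=1$. Yet the $\delta$-pseudo-orbit $\mathbf x_m=\delta\sum_{j<m}(2F)^j\mathbf e_1$ has $m$-th coordinate $\delta 2^{m-1}$, while $(2F)^m\mathbf v$ has $m$-th coordinate $0$, so no orbit shadows it.

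The criterion the paper uses is Dragi\v{c}evi\'c--Pituk: $T$ has shadowing iff $T-\lambda I$ is \emph{surjective} for every $\lambda\in\mathbb S^1$. Against this criterion, your ``if'' direction is not proved. Step (b) only gives bounded below. Your fallback, a hyperbolic splitting built from Riesz projections along arcs, cannot work: $\mathbb S^1$ may lie entirely in $\sigma_{pt}(T)$, as happens for the paper's frequently hypercyclic examples with shadowing, so there is no contour to integrate over.

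The splitting is also unnecessary. By Fekete, $r_{sup}=\inf_l(\cdot)^{1/l}$ is upper semicontinuous and $r_{inf}=\sup_l(\cdot)^{1/l}$ is lower semicontinuous in $\lambda$. Under the hypothesis, $\{r_{sup}<1\}$ and $\{r_{inf}>1\}$ are therefore disjoint open sets covering the circle, so one of them is all of $\mathbb S^1$. What you actually need is to upgrade (b). Since $r_{inf}(\lambda)>0$ forces $\lambda\notin\overline{\{d_k\}}$, the adjoint of $T-\lambda I$ is an injective forward bidiagonal operator. Hence $T-\lambda I$ has dense range, and being bounded below, it is invertible. Equivalently, $T-\lambda I=(I_{\bar d}-\lambda I)(I+B_{\bar u})$ with $u_k=w_k/(d_k-\lambda)$ and $r(B_{\bar u})=1/r_{inf}(\lambda)<1$.

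\textbf{Two further problems.} First, the series displayed in (a) is not a right inverse. For $d_k\equiv\lambda$ it gives $x_k=y_k/w_k$, and then $B_{\bar w}\mathbf x\neq\mathbf y$. The right inverse is the forward solution $x_1=0$, $x_{k+1}=(y_k-(d_k-\lambda)x_k)/w_k$, i.e.\ the paper's $S_{\bar d-\lambda,\bar w}$. Its norm is controlled by the same block products, so your idea survives the index correction.

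Second, and more seriously, your necessity construction via approximate eigenvectors of the adjoint tacitly uses the bounded weights $w_k/(d_k-\lambda)$. These do not exist when $\lambda\in\overline{\{d_k\}}$. In that case $r_{inf}(\lambda)=0$, the increments of $k\mapsto\log|\prod_{j\le k}(d_j-\lambda)/w_j|$ are unbounded below, and your intermediate-value windows are no longer controlled. The paper handles this case algebraically:
\begin{itemize}
\item $\overline{\{d_k\}}\subseteq\sigma(T)$, so a surjective $T-\lambda I$ must be non-injective.
\item Its kernel is one-dimensional, hence complemented, so there is a bounded right inverse $R$.
\item $R-S_{\bar d-\lambda,\bar w}$ is the rank-one map $\mathbf x\mapsto(R\mathbf x)_1\mathbf v_1$, where $\mathbf v_1$ is the kernel vector with first coordinate $1$.
\item Hence $S_{\bar d-\lambda,\bar w}$ is bounded, which forces $r_{sup}(\lambda)<1$.
\end{itemize}

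For $\lambda\notin\overline{\{d_k\}}$ your plan matches the paper's in spirit, but the paper does not build windows by hand. It uses $\sigma_{sur}(T)=\sigma_{ap}(T^*)$, factors $T^*-\lambda I$ through the invertible diagonal $I_{\bar d}-\lambda I$ and a forward shift with weights $w_k/(d_k-\lambda)$, and quotes the annulus description of $\sigma_{ap}$ for forward weighted shifts. Once you use the surjectivity criterion, you need neither $\lambda\in\sigma_{ap}(T)$ nor an explicit non-shadowable pseudo-orbit.
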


\begin{remark}
If $d_k=d$ for every $k\ge1$, then $T_{\bar d,\bar w}$ has the
shadowing property if and only if
$$
\lim_{l\to\infty}\sup_{k\ge0}
\lav w_{k+1}\cdots w_{k+l}\rav^{\frac1l}
<
\lav\lav d\rav-1\rav
\quad\text{or}\quad
\lim_{l\to\infty}\inf_{k\ge0}
\lav w_{k+1}\cdots w_{k+l}\rav^{\frac1l}
>
\lav d\rav+1.
$$
More generally, if $(d_k)_{k\ge1}$ is periodic with period $m$, for each fixed $\lambda\in\mathbb S^1$,
$\lav\prod_{j=1}^{l}(d_{k+j}-\lambda)\rav^{1/l}$
converges uniformly in $k\ge0$, as $l$ tends to infinity, to
$\lav\prod_{j=1}^{m}(d_j-\lambda)\rav^{1/m}$. Therefore
$T_{\bar d,\bar w}$ has the shadowing property if and only if
$$
\max_{\lambda\in\mathbb S^1}
\lav\lc d_1-\lambda\rc\cdots\lc d_m-\lambda\rc\rav^{\frac1m}
<
\lim_{l\to\infty}\inf_{k\ge0}
\lav w_{k+1}\cdots w_{k+l}\rav^{\frac1l},
$$
or
$$
\min_{\lambda\in\mathbb S^1}
\lav\lc d_1-\lambda\rc\cdots\lc d_m-\lambda\rc\rav^{\frac1m}
>
\lim_{l\to\infty}\sup_{k\ge0}
\lav w_{k+1}\cdots w_{k+l}\rav^{\frac1l}.
$$
\end{remark}

Theorem~\ref{Theor_Shadowing Petrurbed Weighted Shifts} recovers, in particular, the necessary and sufficient condition~\eqref{Eq_Weighted Shifts Shadowing Characterization} for the shadowing property of weighted shifts established by Bernardes and Messaoudi \cite{Bernardes_Messaoudi-Shadowing_and_Structural_Stability}.

\begin{remark}\label{Rem_S1-Complemented}   
An operator $T:\mc{B}\to\mc{B}$ is said to be \textit{$\SP^{1}$-complemented} if
\begin{equation}\label{Eq_Closed Complement}
\ker\lc T-\lambda I\rc
\text{ has a closed complement for every }
\lambda\in\SP^{1},
\end{equation}
that is, for every $\lambda\in\SP^{1}$, there exists a closed subspace $\mc M_{\lambda}\sub\mc B$ such that
$\mc B=\ker\lc T-\lambda I\rc\oplus\mc M_{\lambda}$.
The class of $\SP^{1}$-complemented operators includes all operators on Hilbert spaces, as well as perturbed weighted shifts $T_{\bar d,\bar w}$, as will be proved for the latter at the beginning of Section~\ref{section 5}. Moreover, \cite[Theorem~3.5]{Messaoudi_Neto_Saavedra_Tsokanos-On_GH_S_SH_for_Linear_Operators} guarantees that the shadowing property, topological stability, and strong Lipschitz structural stability are equivalent for invertible $\SP^{1}$-complemented operators. Consequently, in view of Theorem~\ref{Theor_Shadowing Petrurbed Weighted Shifts}, an invertible perturbed weighted shift $T_{\bar d,\bar w}:\ell^{p}\lc\N\rc\to\ell^{p}\lc\N\rc$ with $1\le p<+\oo$ is strongly Lipschitz structurally stable (respectively, topologically stable) if and only if condition~\eqref{Eq_Theor Analytic Characterization of Shadowing} holds.
\end{remark}

The next corollary follows directly from Corollary~\ref{Cor_Perturbed Weighted Shift Sufficient Condition} and Theorem~\ref{Theor_Shadowing Petrurbed Weighted Shifts} and shows that there is a large family of perturbed weighted shifts on $\ell^{p}\lc\N\rc$, $1\le p<+\infty$, that are both frequently hypercyclic and have the shadowing property.

\begin{corollary}
Let $T_{\bar d,\bar w}:\ell^{p}(\mathbb N) \to \ell^{p}(\mathbb N)$, $1 \le p < +\infty$, be a perturbed weighted shift satisfying
\begin{equation*}
\max\lfp
2\sup_{k\ge1}\lav d_k\rav,
1+\sup_{k\ge1}\lav d_k\rav
\rfp
<
\inf_{k\ge1}\lav w_k\rav.
\end{equation*}
Then $T_{\bar d,\bar w}$ is frequently hypercyclic and has the shadowing property.
\end{corollary}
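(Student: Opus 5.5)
The statement has two parts, and I will obtain each from an earlier result. Frequent hypercyclicity will follow from Corollary~\ref{Cor_Perturbed Weighted Shift Sufficient Condition}, using its second sufficient condition \eqref{Eq_Cor Perturbed Weighted Shift Strong Sufficient Condition}, which is exactly the hypothesis here. For shadowing, I will verify condition~\eqref{Eq_Theor Analytic Characterization of Shadowing} of Theorem~\ref{Theor_Shadowing Petrurbed Weighted Shifts}. Specifically, I will show that $r_{inf}(\lambda)>1$ for every $\lambda\in\SP^{1}$; then $1\notin[r_{inf}(\lambda),r_{sup}(\lambda)]$, since this interval lies entirely to the right of $1$.

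Set $\delta=\sup_k|d_k|$ and $\omega=\inf_k|w_k|$. The hypothesis gives $\omega>1+\delta$. For $\lambda\in\SP^{1}$ and every $j$, the triangle inequality gives $|d_j-\lambda|\le |d_j|+1\le 1+\delta$. Hence, for all $k\ge0$ and $l\ge1$,
$$
\lav \frac{(d_{k+1}-\lambda)\cdots(d_{k+l}-\lambda)}{w_{k+1}\cdots w_{k+l}}\rav^{1/l}\le \frac{1+\delta}{\omega}<1 .
$$
This is an upper bound, which controls $r_{sup}(\lambda)$, not $r_{inf}(\lambda)$. So the correct conclusion is $r_{sup}(\lambda)\le (1+\delta)/\omega<1$ uniformly in $\lambda\in\SP^1$. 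The whole interval $[r_{inf}(\lambda),r_{sup}(\lambda)]$ therefore lies in $[0,1)$, and~\eqref{Eq_Theor Analytic Characterization of Shadowing} holds. The limits in $l$ defining $r_{inf}$ and $r_{sup}$ exist by the statement of Theorem~\ref{Theor_Shadowing Petrurbed Weighted Shifts} (a standard sub/super-multiplicativity argument). In any case, the bound above holds for every $l$, so it passes to the limit.

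This is essentially a routine verification, and I expect no real obstacle. The one point needing care is orientation. Condition~\eqref{Eq_Cor Perturbed Weighted Shift Strong Sufficient Condition} makes the weights dominate $1+|d_k|$, so the quantity in Theorem~\ref{Theor_Shadowing Petrurbed Weighted Shifts} is uniformly contractive, i.e.\ $r_{sup}<1$, rather than expansive. Only the part $1+\sup|d_k|<\inf|w_k|$ of the maximum is needed for shadowing. The factor $2\sup|d_k|$ is used only through Corollary~\ref{Cor_Perturbed Weighted Shift Sufficient Condition} for frequent hypercyclicity.
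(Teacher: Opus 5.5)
Your proof is correct and follows the paper's own route. Frequent hypercyclicity comes from the strong sufficient condition in Corollary~\ref{Cor_Perturbed Weighted Shift Sufficient Condition}, and shadowing comes from Theorem~\ref{Theor_Shadowing Petrurbed Weighted Shifts} via the uniform bound $r_{sup}(\lambda)\le(1+\sup_k|d_k|)/\inf_k|w_k|<1$ on $\SP^1$, a verification the paper leaves implicit. Delete your opening announcement that you will prove $r_{inf}(\lambda)>1$: it is false here, since $r_{inf}(\lambda)\le r_{sup}(\lambda)<1$, and your argument never uses it.
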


\section{Proof of Theorem~\ref{Theor_H and FH for Perturbed Weighted Shifts} and Corollary~\ref{Cor_Perturbed Weighted Shift Sufficient Condition}}
\label{Sec_Proof of Hypercyclicity}

The proof of Theorem~\ref{Theor_H and FH for Perturbed Weighted Shifts} proceeds by applying Theorem \ref{Theor_SHFH Criterion}. To this end, throughout this section, fix $1 \le p < +\oo$, and let
$T_{\bar d,\bar w}:\ell^{p}(\mathbb N)\to\ell^{p}(\mathbb N)$
be a perturbed weighted shift. Define
\begin{equation}\label{Eq_Dense Subset Definition}
D_0
=
\bigcup_{m=1}^{\infty}
\ker T_{\bar d,\bar w}^{\,m},
\end{equation}
that is, the union of the kernels of the iterates of $T_{\bar d,\bar w}$. 
Also, define the linear map
$S_{\bar d,\bar w}: \ell^{p}\lc \N \rc \to \C^{\N}$
whose $k$-th coordinate is given by
\begin{equation}\label{Eq_Inverse Operator Definition}
\bigl(S_{\bar d,\bar w} \lc x_{l} \rc_{l =1}^{+\oo} \bigr)_k
=
\begin{cases}
0,
& \text{if }k=1,\\[1ex]
\displaystyle
\frac{x_{k-1}}{w_{k-1}}
-
\sum_{l=1}^{k-2}
(-1)^{k-l}
x_l
\frac{d_{l+1}\cdots d_{k-1}}
{w_l\cdots w_{k-1}},
& \text{if }k\ge2.
\end{cases}
\end{equation}
The sum appearing in each coordinate is finite; thus, $S_{\bar d,\bar w}$ is well-defined as a map into $\C^{\N}$.

The following lemma shows that, under assumption~\eqref{Eq_Bounded Inverse Vector}, the map $S_{\bar d,\bar w}$ leaves $D_0$ invariant and that $T_{\bar d,\bar w}\circ S_{\bar d,\bar w}$ is the identity on $D_0$. 
It also identifies $D_0$ with the linear span of the vectors $\bm v_m$ defined in \eqref{Eq_Kernel Basis Vectors}.

\begin{lemma}\label{Lemma_Properties of the Inverse Operator}
Let $S_{\bar{d}, \bar{w}} : \ell^{p}\lc \N \rc \to \C^{\N} $ be the linear operator defined in \eqref{Eq_Inverse Operator Definition}, and let $D_{0}$ be the set defined in \eqref{Eq_Dense Subset Definition}. Assume that condition \eqref{Eq_Bounded Inverse Vector} holds. Then the following assertions hold:

\begin{enumerate}
	
	\item For every $m \ge 1$,
	\begin{equation}\label{Eq_Inverse Vectors and Inverse Operator}
		\bm{v}_{m+1}=S_{\bar{d},\bar{w}}^{\,m}(\bm{v}_{1}) \in \ell^{p}\lc \N \rc,
	\end{equation}
	where the vectors $\bm{v}_{m}$ are defined in \eqref{Eq_Kernel Basis Vectors}.
	
	\item The set $D_{0}$ satisfies 
	\begin{equation}\label{Eq_Spanned Vector Space and Kernels}
		D_{0}
		=
		\bigcup_{m\ge1}\ker T_{\bar{d},\bar{w}}^{\,m}
		=
		\operatorname{span}\langle \bm{v}_{m} : \; m \ge 1 \rangle.
	\end{equation}
    In particular, $\ker T_{\bar{d}, \bar{w}}^{m} = \operatorname{span}\langle \bm{v}_{1}, \dots, \bm{v}_{m} \rangle$  and $ S_{\bar{d},\bar{w}}(D_{0})\subseteq D_{0}  \sub \ell^{p}\lc \N \rc$. 
	
	\item For every $\bm{x} \in D_{0}$,
	\begin{equation}\label{Eq_Inverse Operator Equality}
		T_{\bar{d},\bar{w}}\circ S_{\bar{d},\bar{w}}\lc \bm{x}  \rc
		=
        \bm{x} .
	\end{equation}

\end{enumerate}

\end{lemma}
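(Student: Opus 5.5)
The plan is to reduce everything to two coordinatewise identities, both proved by direct computation from the recursions satisfied by the symmetric polynomials $D_r(s)$. First, $T_{\bar d,\bar w}\circ S_{\bar d,\bar w}\bm x=\bm x$ holds for \emph{every} $\bm x\in\ell^p(\mathbb N)$ as an identity in $\C^{\N}$. Here $T_{\bar d,\bar w}$ is extended formally to $\C^{\N}$ by the same coordinate formula, $(T\bm y)_k=d_ky_k+w_ky_{k+1}$. Second, $\bm v_{m+1}=S_{\bar d,\bar w}\bm v_m$ for every $m\ge1$. Once these two identities and the hypothesis \eqref{Eq_Bounded Inverse Vector} (which says exactly that $\bm v_m\in\ell^p(\mathbb N)$) are available, the three assertions follow by linear algebra.

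\textbf{Step 1 (right inverse).} For $k\ge1$, write $(S\bm x)_{k+1}=\sum_{l=1}^{k}(-1)^{k+1-l}x_l\,\frac{d_{l+1}\cdots d_k}{w_l\cdots w_k}$, with sign $+$ and empty product $1$ when $l=k$. Then
\[
w_k(S\bm x)_{k+1}=x_k-d_k\sum_{l=1}^{k-1}(-1)^{k-l}x_l\frac{d_{l+1}\cdots d_{k-1}}{w_l\cdots w_{k-1}}=x_k-d_k(S\bm x)_k,
\]
so $(TS\bm x)_k=d_k(S\bm x)_k+w_k(S\bm x)_{k+1}=x_k$. This telescoping check is routine.

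\textbf{Step 2 (the vectors $\bm v_m$).} First I verify $T\bm v_1=0$ and $T\bm v_{m+1}=\bm v_m$ coordinatewise. Using the expansion $D_k(s)=D_{k-1}(s)-d_kD_{k-1}(s-1)$, which comes from the generating function \eqref{Eq_Generating Functions of D and Q}, one gets
\[
(T\bm v_{m+1})_k=\frac{d_kD_{k-1}(k-m-1)+D_k(k-m)}{W_{k-1}}=\frac{D_{k-1}(k-m)}{W_{k-1}}=(\bm v_m)_k .
\]
For $m=0$, $D_{k-1}(k)=0$, so $T\bm v_1=0$. Next, the first coordinate of each $\bm v_{m+1}$ ($m\ge1$) is $D_0(1-m-1)=0$, and $S\bm y$ is the \emph{unique} sequence $\bm z\in\C^{\N}$ with $z_1=0$ and $T\bm z=\bm y$, because the recursion in Step 1 determines $z_{k+1}$ from $z_k$ and $y_k$. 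Hence $S\bm v_m=\bm v_{m+1}$, and induction gives \eqref{Eq_Inverse Vectors and Inverse Operator}, with membership in $\ell^p$ supplied by \eqref{Eq_Bounded Inverse Vector}.

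\textbf{Step 3 (kernels).} I show $\ker T^m=\operatorname{span}\langle\bm v_1,\dots,\bm v_m\rangle$ by induction on $m$. For $m=1$, the equation $T\bm x=0$ means $x_{k+1}=-d_kx_k/w_k$, so $\bm x$ is determined by $x_1$; since $\bm v_1\in\ell^p$ with first coordinate $1$, the kernel is exactly $\C\bm v_1$. For the inductive step, if $T^{m+1}\bm x=0$ then $T\bm x\in\ker T^m$, so $T\bm x=\sum_{j\le m}c_j\bm v_j=T\bigl(\sum_j c_j\bm v_{j+1}\bigr)$. Therefore $\bm x-\sum_j c_j\bm v_{j+1}\in\ker T=\C\bm v_1$. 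The reverse inclusion is immediate from $T^j\bm v_{j}=T\bm v_1=0$. Taking the union over $m$ gives \eqref{Eq_Spanned Vector Space and Kernels}, and $S(D_0)\subseteq D_0$ follows from $S\bm v_m=\bm v_{m+1}$ and linearity. Assertion 3 is Step 1 restricted to $D_0$.

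The only real obstacle I expect is index bookkeeping in Step 2: matching the explicit formula \eqref{Eq_Inverse Operator Definition} for $S$ to the $D_{k-1}(k-m)$ entries, and handling the boundary conventions $D_r(s)=0$ for $s<0$ or $s>r$. The uniqueness argument (first coordinate zero) lets me avoid computing $S\bm v_m$ directly, so I would use it rather than expanding the sums.
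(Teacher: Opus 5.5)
Your proof is correct, but it reaches assertion~1 by a different route from the paper's. The paper computes $S_{\bar d,\bar w}\bm v_m$ directly. It unrolls the recurrence $D_r(s)=D_{r-1}(s)-d_rD_{r-1}(s-1)$ into the alternating-sum identity $D_{k-2}(k-2-m)=\sum_{l=m}^{k-2}(-1)^{k-l}D_{l-1}(l-m)\,d_{l+1}\cdots d_{k-2}$ and uses it to collapse the sum in the explicit formula \eqref{Eq_Inverse Operator Definition}. Only at the end does it check $T_{\bar d,\bar w}\circ S_{\bar d,\bar w}=I$ on $D_0$, by a separate coordinate computation. It also takes $\ker T_{\bar d,\bar w}=\C\bm v_1$ for granted.

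You reverse the order. The one-line relation $w_k(S\bm x)_{k+1}=x_k-d_k(S\bm x)_k$ gives $TS\bm x=\bm x$ for every $\bm x$ at once. It also identifies $S\bm y$ as the unique solution of $T\bm z=\bm y$ with $z_1=0$, which is where $w_k\neq0$ enters. You then need only the single-step recurrence to get $T\bm v_{m+1}=\bm v_m$, and $(\bm v_{m+1})_1=0$ forces $S\bm v_m=\bm v_{m+1}$.

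What your route buys:
\begin{itemize}
\item it avoids the multi-term symmetric-function identity;
\item it explains the formula for $S_{\bar d,\bar w}$ as the right inverse normalized by a vanishing first coordinate, the same normalization the paper later uses in the proof of Lemma~\ref{Lem_Right Inverse Existence};
\item it justifies the base case $\ker T=\C\bm v_1$ instead of asserting it.
\end{itemize}
The only cost is the harmless formal extension of $T_{\bar d,\bar w}$ to $\C^{\N}$, which the paper's purely computational check does not need.

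One sign slip needs correcting. Shifting \eqref{Eq_Inverse Operator Definition} gives $(S\bm x)_{k+1}=\sum_{l=1}^{k}(-1)^{k-l}x_l\frac{d_{l+1}\cdots d_k}{w_l\cdots w_k}$, not $(-1)^{k+1-l}$. Your own remark that the $l=k$ term carries a $+$ sign already contradicts the exponent you wrote. Likewise, the middle expression of your display should carry $(-1)^{k-1-l}$ instead of $(-1)^{k-l}$. With these fixes, the relation $w_k(S\bm x)_{k+1}=x_k-d_k(S\bm x)_k$ holds exactly as you state it, and the rest of your argument goes through unchanged.
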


\begin{proof}

In view of \eqref{Eq_Kernel Basis Vectors}, condition \eqref{Eq_Bounded Inverse Vector} ensures that $\bm{v}_{m}\in\ell^{p}\lc \N \rc$ for every $m\ge1$. It is first shown that $S_{\bar{d},\bar{w}}(\bm{v}_{m})=\bm{v}_{m+1}$ for every $m\ge1$. For $k \ge 1$, the $k$-th coordinate of $\bm{v}_{m}$ is $
(\bm v_m)_k
= D_{k-1}\lc k-m\rc / W_{k-1}$.  
By the recurrence relation $D_r(s)=D_{r-1}(s)-d_rD_{r-1}(s-1)$, one has 
$$D_{k-2}(k-2-m)= \sum_{l=m}^{k-2}(-1)^{k-l}D_{l-1}(l-m)d_{l+1}\cdots d_{k-2}.$$
Consequently, 
\begin{align*}
    (S_{\bar{d},\bar{w}}(\bm v_m))_{k} =  \dfrac{ D_{k-2}\lc k-1-m \rc-d_{k-1}D_{k-2}(k-2-m) } {W_{k-1}} =  \dfrac{D_{k-1}(k-(m+1))}{W_{k-1}} = \lc \bm v_{m+1} \rc_{k}  .
\end{align*}
Therefore, $ S_{\bar{d},\bar{w}}(\bm{v}_{m})=\bm{v}_{m+1}$, $m \ge 1$, and hence \eqref{Eq_Inverse Vectors and Inverse Operator} follows.

\paragraph{}

To establish \eqref{Eq_Spanned Vector Space and Kernels}, it suffices to prove by induction that
$\ker T_{\bar d,\bar w}^{\,m}
=
\operatorname{span}\{\bm v_1,\ldots,\bm v_m\}$ for every $m\ge1$.
Since $\bm v_1$ spans $\ker T_{\bar d,\bar w}$, the assertion holds for $m=1$. 
Assume that it holds for some $m\ge1$ and let
$\bm{x} \in\ker T_{\bar d,\bar w}^{\,m+1}$. Then
$T_{\bar d,\bar w}(\bm{x})\in\ker T_{\bar d,\bar w}^{\,m}$, so, by the induction hypothesis, 
$$
T_{\bar d,\bar w}(\bm{x})
=
\sum_{j=1}^{m}a_j\bm v_j
=
T_{\bar d,\bar w}
\left(
\sum_{j=1}^{m}a_j\bm v_{j+1}
\right)
$$
for some scalars $a_1,\ldots,a_m$, where the last equality follows from \eqref{Eq_Inverse Vectors and Inverse Operator}. Hence,
$
\bm{x} -\sum_{j=1}^{m}a_j\bm v_{j+1}
\in\ker T_{\bar d,\bar w}
=
\operatorname{span}\{\bm v_1\}$, 
and therefore
$\bm{x} \in\operatorname{span}\{\bm v_1,\ldots,\bm v_{m+1}\}$.
This proves
$$
\ker T_{\bar d,\bar w}^{\,m+1}
\subseteq
\operatorname{span}\{\bm v_1,\ldots,\bm v_{m+1}\}.$$ 
Conversely, \eqref{Eq_Inverse Vectors and Inverse Operator} gives
$T_{\bar d,\bar w}^{\,j-1}(\bm v_j)=\bm v_1$ and
$T_{\bar d,\bar w}(\bm v_1)=0$ for every $1\le j\le m+1$. Thus,
$\bm v_j\in\ker T_{\bar d,\bar w}^{\,m+1}$ for every $1\le j\le m+1$, which gives the reverse inclusion and completes the induction. 
Consequently, $D_0
=
\bigcup_{m\ge1}\ker T_{\bar d,\bar w}^{\,m}
=
\operatorname{span}\langle \bm v_m:m\ge1 \rangle,$ as claimed.

\paragraph{}

It remains to prove \eqref{Eq_Inverse Operator Equality}. Let $ \bm{x} \in D_{0}$.
For $k=1$,
$$
\bigl(T_{\bar{d},\bar{w}}\circ S_{\bar{d},\bar{w}}\lc \bm{x}  \rc \bigr)_{1}
=
d_{1}\bigl(S_{\bar{d},\bar{w}}\lc \bm{x}  \rc \bigr)_{1}
+
w_{1}\bigl(S_{\bar{d},\bar{w}}\lc \bm{x}  \rc \bigr)_{2}
=
w_{1}\frac{x_{1}}{w_{1}}
=
x_{1}.
$$
Now let $k\ge2$. By the definitions of $T_{\bar{d}, \bar{w}}$ and $S_{\bar{d}, \bar{w}}$, 
\begin{align*}
\bigl(T_{\bar{d},\bar{w}}\circ S_{\bar{d},\bar{w}}\lc \bm{x}  \rc \bigr)_{k}
&=
d_{k}\bigl(S_{\bar{d},\bar{w}}\lc \bm{x}  \rc \bigr)_{k}
+
w_{k}\bigl(S_{\bar{d},\bar{w}}\lc \bm{x}  \rc \bigr)_{k+1}
\\ 
&=
d_{k}\frac{x_{k-1}}{w_{k-1}} +
\sum_{l=1}^{k-2}
(-1)^{l}
\left(
\prod_{j=0}^{l}
\frac{d_{k-j}}{w_{k-1-j}}
\right) x_{k-1-l}
\\
&\quad
+ x_{k}
- w_{k}\frac{d_{k}}{w_{k}}
\frac{x_{k-1}}{w_{k-1}}
+
w_{k}
\sum_{l=2}^{k-1}
(-1)^{l}
\left(
\prod_{j=1}^{l}
\frac{d_{k+1-j}}{w_{k+1-j}}
\right)
\frac{x_{k-l}}{w_{k-l}}
=
x_{k}. 
\end{align*} 
This proves \eqref{Eq_Inverse Operator Equality} and completes the proof. 
\end{proof}

\paragraph{}

To apply Theorem \ref{Theor_SHFH Criterion}, it remains to prove that
$D_{0}$ is dense in $\ell^{p} \lc \N \rc$.  By \eqref{Eq_Spanned Vector Space and Kernels}, this is equivalent to showing that
\begin{equation}\label{Eq_Claim Dense Kernels}
\overline{\operatorname{span}\langle \bm v_m : m\ge1\rangle}
\underset{\eqref{Eq_Spanned Vector Space and Kernels}}{=}
\ell^{p} \lc \N \rc.
\end{equation}
The following lemma establishes the required density.

\begin{lemma}\label{Lem_Density Kernel Vectors}
Assume that condition~\eqref{Eq_Dense Kernels Sufficient Condition} holds. Then $\langle \bm v_m : m\ge1\rangle$ is dense in $\ell^{p}(\mathbb N)$.
\end{lemma}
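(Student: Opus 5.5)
The natural strategy is to show that each canonical basis vector $\bm e_n$ lies in the closed span of the $\bm v_m$. Since finitely supported sequences are dense in $\ell^{p}(\N)$ for $1\le p<\infty$, this gives \eqref{Eq_Claim Dense Kernels}. The candidate representation comes from inverting the triangular relation between $(\bm v_m)$ and $(\bm e_k)$ formally, using the generating functions \eqref{Eq_Generating Functions of D and Q}. Concretely, I will prove the identity
\[
\bm e_n = W_{n-1}^{-1}\sum_{m=1}^{\infty} Q_n(m-n)\,\bm v_m \qquad\text{(convergence in } \ell^{p}(\N)\text{)},
\]
possibly up to an index shift or a harmless normalization; the exact bookkeeping will be fixed in the coordinate computation below. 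By hypothesis \eqref{Eq_Dense Kernels Sufficient Condition}, the series on the right converges in $\ell^{p}(\N)$ to some vector $\bm y_n$. Its partial sums lie in $\operatorname{span}\langle \bm v_m\rangle$, so $\bm y_n$ lies in the closure of that span, and it only remains to identify $\bm y_n$.

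To identify $\bm y_n$, I use that convergence in $\ell^{p}$ implies coordinatewise convergence. So $(\bm y_n)_k=\lim_M \sum_{m\le M} Q_n(m-n)D_{k-1}(k-m)/W_{k-1}$. For fixed $k$, $D_{k-1}(k-m)=0$ as soon as $m>k$, so this is a finite sum:
\[
(\bm y_n)_k=\frac{1}{W_{k-1}}\sum_{s=0}^{k-n} Q_n(s)\,D_{k-1}(k-n-s),
\]
after substituting $s=m-n$. The coefficient is the $t^{k-n}$ coefficient of
\[
\prod_{j=1}^{n}(1-d_jt)^{-1}\prod_{j=1}^{k-1}(1-d_jt).
\]
For $k-1\ge n$ this product equals $\prod_{j=n+1}^{k-1}(1-d_jt)$, a polynomial of degree $k-1-n<k-n$, so the coefficient vanishes. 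For $k=n$ the coefficient of $t^{0}$ is $1$, giving $(\bm y_n)_n=1/W_{n-1}$. For $k<n$ the index $k-n$ is negative and the sum is empty, so it is zero.

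Hence $\bm y_n=W_{n-1}^{-1}\bm e_n$, and every $\bm e_n$ lies in the closed span. This proves the lemma. The main obstacle is keeping the indices straight so that the generating-function cancellation is exact. If the summation range in \eqref{Eq_Dense Kernels Sufficient Condition} is really meant as written with $Q_n(m-n)$, the computation above matches it. Passing from the $\ell^p$ limit to coordinates needs no interchange argument, because each coordinate sum is finite.
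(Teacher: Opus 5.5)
Your proposal is correct and follows the paper's proof essentially step for step. Like the paper, you write each $\bm e_n$ as a nonzero multiple of the $\ell^p$-convergent series $\sum_{m}Q_n(m-n)\bm v_m$ and identify its coordinates through the cancellation $\prod_{j=1}^{n}(1-d_jt)^{-1}\prod_{j=1}^{k-1}(1-d_jt)=\prod_{j=n+1}^{k-1}(1-d_jt)$. One small fix: your opening display has the normalization inverted. Your own computation gives $\bm y_n=W_{n-1}^{-1}\bm e_n$, that is, $\bm e_n=W_{n-1}\sum_{m}Q_n(m-n)\bm v_m$, exactly as in the paper; this is harmless since $W_{n-1}\neq0$.
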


 \begin{proof}
 
Since the linear span of the canonical basis $\{\bm e_n\}_{n\ge1}$ is dense in $\ell^{p}\lc \N \rc$, it suffices to show that every basis vector $\bm{e}_{n}$ belongs to
$\overline{\operatorname{span}\langle \bm v_m : m\ge1\rangle}$.
Here, $\bm e_n=(\delta_{n,k})_{k=1}^{\infty}
=( \underbrace{0,\ldots,0}_{n-1}, 1,0,\ldots),$ where $\delta_{n,k}$ denotes the Kronecker delta.
To this end, it suffices to show that for every $n\ge1$,
\begin{equation}\label{Eq_Orthonormal Basis Expantion}
\bm e_n
=
W_{n-1}
\lim_{M\to\infty}
\sum_{m=1}^{M}
Q_n(m-n)\,\bm v_m
\qquad\text{in }\ell^{p} \lc \N \rc.
\end{equation} 

Notice that, by \eqref{Eq_Kernel Basis Vectors}, condition \eqref{Eq_Dense Kernels Sufficient Condition} guarantees that the series appearing on the right-hand side of \eqref{Eq_Orthonormal Basis Expantion} converges in $\ell^p(\N)$. It therefore remains to verify the coordinatewise identity
	\begin{equation}\label{Eq_Kernel Combinatorics}
		\delta_{n,k}
		=
		\frac{W_{n-1}}{W_{k-1}}
		\sum_{m=n}^{k}
		Q_n(m-n)\,
		D_{k-1}(k-m),
		\qquad n,k\ge1.
	\end{equation}
If $1\le k<n$, then the sum is empty, and hence both sides  of \eqref{Eq_Kernel Combinatorics} are equal to zero.
If $k=n$, then 
$$ \delta_{n,n}
	=
	\frac{W_{n-1}}{W_{n-1}}
	Q_n(0)D_{n-1}(0)
	=
	1 ,$$
since $Q_n(0)=D_{n-1}(0)=1$. 
Finally, consider the case $k>n$, and set $r=k-n$. Recall that the generating functions of $Q_n$ and $D_{k-1}$ are given in \eqref{Eq_Generating Functions of D and Q}. Thus, the sum on the right-hand side of \eqref{Eq_Kernel Combinatorics} is the $r$-th coefficient of the formal series 
\begin{align*}
\left(
\sum_{j=0}^{\infty}Q_n(j)t^j
\right)
\left(
\sum_{j=0}^{k-1}D_{k-1}(j)t^j
\right)
&\underset{\eqref{Eq_Generating Functions of D and Q}}{=}
\prod_{j=n+1}^{k-1}(1-d_{j} t).
\end{align*}
Since the polynomial on the right-hand side has degree at most $r-1$, its $r$-th coefficient is zero. Consequently, $
\delta_{n,k}=0$, as required. 

This completes the proof of the lemma.
\end{proof}

\begin{proof}[Proof of Theorem \ref{Theor_H and FH for Perturbed Weighted Shifts}]

Assume that conditions \eqref{Eq_Bounded Inverse Vector} and \eqref{Eq_Dense Kernels Sufficient Condition} hold. Then, by Lemma~\ref{Lemma_Properties of the Inverse Operator}, the subset $D_{0} =\bigcup_{m=1}^{\infty}\ker T_{\bar d,\bar w}^{m} $ is dense in $\ell^{p}\lc \N \rc$ and $ T_{\bar d,\bar w}\circ S_{\bar d,\bar w}x=x$, for all $ x\in D_{0} $.

The proof proceeds by applying Theorem \ref{Theor_SHFH Criterion} to the operator
$T_{\bar d,\bar w}$, the map $S_{\bar d,\bar w}$, and the subset $
D_{0}$.
It remains only to verify the hypotheses corresponding to the three assertions of
Theorem~\ref{Theor_SHFH Criterion}.

\paragraph{Point (1): Supercyclicity.}

By the definition of $D_{0}$, for every $x\in D_{0}$ there exists an integer $m_{x}\ge1$ such that
\begin{equation}\label{Eq_T Finally Zero}
T_{\bar d,\bar w}^{m}x=0,
\qquad
m\ge m_{x}.
\end{equation}
Consequently, for every $x,y\in D_{0}$, $ \|T_{\bar d, \bar w}^{m}x\|_{p} \,\|S_{\bar d, \bar w}^{m}y\|_{p} =0 $ whenever $m\ge m_{x}$.
Condition \eqref{Eq_S Criterion Limit Zero} therefore holds, and
$T_{\bar d,\bar w}$ is supercyclic.

\paragraph{Point (2): Hypercyclicity.}

Assume first that condition
\eqref{Eq_Hypercyclic Sufficient Condition Liminf} holds. By relation \eqref{Eq_Kernel Basis Vectors}, one has that $\ldav \bm{v}_{m} \rdav_{p}^{p} = \sum_{k=m}^{+\oo} \lc \lav D_{k-1}\lc k -m \rc \rav^{p}/ \lav W_{k-1} \rav^{p} \rc $ and, therefore, for all $n\ge 0$, 
$$ \liminf_{m\to \infty} \lc \|\bm v_{m}\|_{p} +\cdots+\|\bm v_{m+n}\|_{p} \rc 
\underset{\eqref{Eq_Inverse Vectors and Inverse Operator},\eqref{Eq_Inverse Operator Equality}}{\le}
\lc \sum_{j=0}^n\lav\lav T_{\bar d,\bar w}^j\rav \rav \rc\liminf_{m\to \infty}||\bm v_m||_{p} \underset{\eqref{Eq_Hypercyclic Sufficient Condition Liminf}}{=}0. $$ 
Consequently, for every integer $n \ge0$, there exists an integer $m_{n}$ such that
$$
\|\bm v_{m_{n}}\|_{p} + \cdots + \|\bm v_{m_{n}+n}\|_{p}
<
\frac1{n+1}.
$$ 

Fix $ \bm{x} =\sum_{j=1}^{N}a_{j}\bm v_{j}\in D_{0} $. By relation \eqref{Eq_Inverse Vectors and Inverse Operator}, one infers that $ 
S_{\bar d,\bar w}^{m_{n}}\lc \bm{x}  \rc
=
\sum_{j=1}^{N}
a_{j}\bm v_{m_{n}+j}$. Therefore, for all $n \ge N$,
$$
\|S_{\bar d,\bar w}^{m_{n}} \lc \bm{x}  \rc\|
\le
\sum_{j=1}^{N}|a_{j}|
\,
\|\bm v_{m_{n}+j}\|_{p} \le   \dfrac{\max_{1\le j \le N}  \lav a_{j} \rav}{n+1}  .
$$
Upon taking the limit $n \to +\oo$, it follows that $ S_{\bar d,\bar w}^{m_{n}}x\longrightarrow 0$. 
Thus, the last limit, taken together with \eqref{Eq_T Finally Zero}, yields that condition
\eqref{Eq_H Criterion LimInf Condition}
is satisfied.
Therefore $T_{\bar d,\bar w}$ is hypercyclic.

\paragraph{}

Conversely, assume that
$\{ W_{m-1} \bm v_{m}\}_{m\ge1}$ is a Schauder basis and that
$T_{\bar d,\bar w}$ is hypercyclic. Equivalently, by relation \eqref{Eq_Inverse Vectors and Inverse Operator} of Lemma \ref{Lemma_Properties of the Inverse Operator}, one has that $\{ W_{m-1} \; S_{\bar{d}, \bar{w}}^{m-1}\bm v_{1}\}_{m\ge1}$
is a Schauder basis of $\ell^{p}(\mathbb N)$. 
Let $ x_{h}
=
\sum_{m=1}^{+\infty}
a_{m} W_{m-1}
S_{\bar{d}, \bar{w}}^{m-1}\bm v_{1} $ be a hypercyclic vector.
Consequently, there exists a sequence
$(m_{n})$ such that $\lim_{n\to +\infty}T_{\bar d,\bar w}^{m_{n}}x_{h}= e_{1}$. Since 
$$
T^{m_n}x_{h}
=
a_{m_n + 1} W_{m_{n}}  \bm v_{1}
+
\sum^{+\infty}_{k=2}
a_{m_n+k} {W_{m_{n} + k-1} \over W_{k-1}} W_{k-1} \bm v_{k} , 
$$
the uniqueness of the Schauder expansion implies that $
\lim_{n \to +\oo} a_{m_{n}+1} W_{m_{n}}= 1.$ Moreover, every convergent series in a Banach space has terms tending to zero, hence 
$$\underset{n \to +\infty}{\lim} a_{m_{n}+1} W_{m_{n}}\bm v_{m_{n}+1} = 0.$$

Consequently $\underset{n \to +\infty}{\lim} \|\bm v_{m_{n}+1}\|_{p}
=0,$ This proves
\eqref{Eq_Hypercyclic Sufficient Condition Liminf}.

\paragraph{Point (3): Frequent hypercyclicity.}

For every $ \bm{x} \in D_{0}$,
the series $ \sum_{m=0}^{+\infty}T^{m}x $ 
contains only finitely many non-zero terms by
\eqref{Eq_T Finally Zero}; hence it converges unconditionally. 
It therefore remains to prove the unconditional convergence of
$ \sum_{m=0}^{+\infty}S^{m}\lc \bm{x}  \rc $. 
Since $D_{0}$ is the linear span of
$\{\bm v_{m}\}_{m\ge1}$,
it is enough to consider vectors of the form
$ \bm{x} =\bm v_{n}$.
By relation \eqref{Eq_Inverse Vectors and Inverse Operator}, $ S^{m}\bm v_{n}
=
\bm v_{n+m}$ and, therefore, $ \sum_{m=0}^{+\infty}S^{m}\bm v_{n}
=
\sum_{m=n}^{+\infty}\bm v_{m}$, 
which converges unconditionally due to
\eqref{Eq_Freq Hypercyclic Sufficient Condition Series Convergence} and \eqref{Eq_Kernel Basis Vectors}. Thus condition
\eqref{Eq_FH Criterion Convergent Series Condition}
is satisfied, and
$T_{\bar d,\bar w}$ is frequently hypercyclic.

To show that $\SP^{1} \sub \sigma_{pt}\lc T_{\bar{d}, \bar{w}} \rc $, fix $\lambda \in \SP^{1}$.  Since the series $\sum_{m=1}^{+\oo} \bm{v}_{m}$ 
converges unconditionally by assumption \eqref{Eq_Freq Hypercyclic Sufficient Condition Series Convergence},
the Bounded Multiplier Test \cite[Theorem 4.2.8]{Megginson-An_Introduction_to_Banach_Space_Theory} guarantees that the vector 
$$ \bm{w}_{\lambda} = \sum_{m=1}^{+\oo} \lambda^{m-1} \, \lc \dfrac{ D_{k-1}\lc k-m \rc } {W_{k-1}}  \rc_{k=1}^{+\oo}  $$
is well defined in $\ell^{p}\lc \N \rc$. Since $T_{\bar d,\bar w}(\mathbf v_{m+1})=\mathbf v_{m}$,  for all $ m\ge1$, one infers that  $ T\lc \bm{w}_{\lambda} \rc = \lambda\bm{w}_{\lambda} $. In other words, $\lambda \in \sigma_{pt}\lc T_{\bar{d}, \bar{w}} \rc$. 

\smallskip

Finally, to establish that condition~\eqref{Eq_Strong Condition for FH} implies \eqref{Eq_Freq Hypercyclic Sufficient Condition Series Convergence}, consider an arbitrary subsequence $(\bm v_{m_l})_{l\ge1}$ of $(\bm v_m)_{m\ge1}$. Since the subsequence is arbitrary, it suffices to show that the subseries $\sum_{l=1}^{\infty}\bm v_{m_l}$ converges in $\ell^p(\mathbb N)$. To this end, for every $N\ge1$,
\begin{align*}
\ldav\sum_{l=1}^{N}\bm v_{m_l}\rdav_p
\underset{\eqref{Eq_Kernel Basis Vectors}}{\le}
\ldav
\lc
\dfrac{\sum_{l=1}^{N}\lav D_{k-1}\lc k-m_l\rc\rav}
{\lav W_{k-1}\rav}
\rc_{k=1}^{+\oo}
\rdav_p \underset{\eqref{Eq_D Quantities Definition}}{\le}
\ldav
\lc
\dfrac{\prod_{j=1}^{k-1}(1+\lav d_j\rav)}
{\lav w_1\cdots w_{k-1}\rav}
\rc_{k=1}^{+\oo}
\rdav_p
\underset{\eqref{Eq_Strong Condition for FH}}{<}
+\infty.
\end{align*}
Moreover, for every fixed $k\ge1$, only finitely many terms $D_{k-1}\lc k-m_l\rc$ are nonzero. Hence, the partial sums $\sum_{l=1}^{N}\bm v_{m_l}$ converge coordinatewise as $N\to+\infty$. Since the dominating sequence $\bar z=\lc z_k\rc_{k\ge1}$, where $z_k= \prod_{j=1}^{k-1} \lc 1+\lav d_j\rav \rc / \lav w_{1}\cdots w_{k-1}\rav $, belongs to $\ell^p(\mathbb N)$ by \eqref{Eq_Strong Condition for FH}, the dominated convergence theorem yields the convergence of $\sum_{l=1}^{\infty}\bm v_{m_l}$ in $\ell^p(\mathbb N)$. Therefore, $\sum_{m=1}^{\infty}\bm v_m$ converges unconditionally.

The proof is complete. 
\end{proof}

\paragraph{} 
Finally, the proof of Corollary \ref{Cor_Perturbed Weighted Shift Sufficient Condition} will make use of the following lemma.

\begin{lemma}\label{Lem_Norm of S}
Let $c_{\bar{d}, \bar{w}} $ be the quantity defined in \eqref{Eq_Cor Inverse Operator Norm Bound}, and let $S_{\bar{d}, \bar{w}} : \ell^{p}\lc \N \rc \to \C^{\N}$ be the linear map defined in \eqref{Eq_Inverse Operator Definition}. 
Assume that $c_{\bar{d}, \bar{w}} < +\oo$. Then $S_{\bar{d}, \bar{w}} \lc \ell^{p}\lc \N \rc \rc  \sub \ell^{p}\lc \N \rc$. In particular, the linear operator $S_{\bar{d}, \bar{w}} : \ell^{p}\lc \N \rc \to \ell^{p}\lc \N \rc$ is well defined and $\ldav S_{\bar{d}, \bar{w}} \rdav \le c_{\bar{d}, \bar{w}}$. 

Moreover, if $ 2 \sup_{k \ge 1}  \lav d_{k} \rav < \inf_{k \ge 1}  \lav w_{k} \rav$, then $c_{\bar{d}, \bar{w}} < 1 / \ldav I_{\bar{d}} \rdav$. Furthermore, if $c_{\bar{d}, \bar{w}} < 1 / \ldav I_{\bar{d}} \rdav$, then  condition \eqref{Eq_Dense Kernels Sufficient Condition} is satisfied.

\end{lemma}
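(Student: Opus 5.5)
The plan is to write $S_{\bar d,\bar w}$ as an absolutely convergent sum of weighted forward shifts, and then to read off all three assertions from the resulting norm bound $\ldav S_{\bar d,\bar w}\rdav\le c_{\bar d,\bar w}$.

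\emph{Step 1: the norm bound.} In \eqref{Eq_Inverse Operator Definition}, reindex the sum by the lag $j=k-1-l\ge 1$. The $k$-th coordinate then becomes
$$\frac{x_{k-1}}{w_{k-1}}+\sum_{j\ge1}(-1)^{j}\,x_{k-1-j}\,\frac{d_{k-j}\cdots d_{k-1}}{w_{k-1-j}\cdots w_{k-1}},$$
where the sign is $-(-1)^{j+1}=(-1)^j$. So $S_{\bar d,\bar w}=\sum_{j\ge0}(-1)^jF_j$ coordinatewise. Here $F_j$ is the forward shift by $j+1$ places, with the weight on $x_i$ equal to $d_{i+1}\cdots d_{i+j}/(w_i\cdots w_{i+j})$. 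A weighted shift on $\ell^p$ has norm equal to the supremum of its weights. Hence $\ldav F_0\rdav=\sup_k 1/|w_k|$, and $\ldav F_j\rdav$ is exactly the $j$-th supremum appearing in \eqref{Eq_Cor Inverse Operator Norm Bound}. When $c_{\bar d,\bar w}<\infty$, the series $\sum_j(-1)^jF_j$ converges absolutely in operator norm to a bounded operator of norm at most $c_{\bar d,\bar w}$. Each coordinate of $F_jx$ is zero for $j$ large, so this operator agrees coordinatewise with $S_{\bar d,\bar w}$. This gives $S_{\bar d,\bar w}(\ell^p)\subseteq\ell^p$ and $\ldav S_{\bar d,\bar w}\rdav\le c_{\bar d,\bar w}$. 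The index bookkeeping in this regrouping (signs, and which $d$'s go with which $w$'s) is the step I expect to need the most care. It is nonetheless routine.

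\emph{Step 2: the explicit case.} Put $a=\sup_k|d_k|$ and $b=\inf_k|w_k|$, and assume $2a<b$. Bounding each factor gives
$$c_{\bar d,\bar w}\le \frac1b\sum_{l\ge0}\Big(\frac ab\Big)^l=\frac1{b-a}.$$
Since $b>2a$ is equivalent to $b-a>a$, this yields $c_{\bar d,\bar w}<1/a$. When $a=0$ the right-hand side is $+\infty$, and the claim is trivial.

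\emph{Step 3: condition \eqref{Eq_Dense Kernels Sufficient Condition}.} First I check that $\bm v_1=\bm e_1-S_{\bar d,\bar w}(d_1\bm e_1)$. Directly from \eqref{Eq_Inverse Operator Definition}, the $k$-th coordinate of $S_{\bar d,\bar w}(d_1\bm e_1)$ is $(-1)^k d_1\cdots d_{k-1}/W_{k-1}$, which is $-(\bm v_1)_k$ for $k\ge2$, and the first coordinate matches as well. Therefore $\bm v_1\in\ell^p$. The recurrence computation in the proof of Lemma~\ref{Lemma_Properties of the Inverse Operator} is purely coordinatewise, so it gives $\bm v_m=S_{\bar d,\bar w}^{m-1}\bm v_1$, and hence $\ldav\bm v_m\rdav_p\le c^{m-1}\ldav\bm v_1\rdav_p$, writing $c=c_{\bar d,\bar w}$. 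In passing, this also gives \eqref{Eq_Bounded Inverse Vector}.

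Next, $Q_n(s)$ is a sum of $\binom{s+n-1}{n-1}$ monomials of degree $s$, so $|Q_n(s)|\le\binom{s+n-1}{n-1}a^s$. It follows that
$$\sum_{m\ge n}|Q_n(m-n)|\,\ldav\bm v_m\rdav_p\le \ldav\bm v_1\rdav_p\,c^{n-1}\sum_{s\ge0}\binom{s+n-1}{n-1}(ac)^s .$$
This is a polynomial-times-geometric series, and it is finite because $ac<1$ by hypothesis. The series in \eqref{Eq_Dense Kernels Sufficient Condition} is therefore absolutely convergent in $\ell^p(\N)$, hence convergent, for every $n\ge1$.
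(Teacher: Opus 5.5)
Your proof is correct and follows essentially the same route as the paper. The steps match one for one: the norm bound comes from splitting $S_{\bar d,\bar w}$ into weighted forward shifts (the paper's vectors $\bm{y}(l)$ with the triangle inequality), the explicit case uses the geometric-series estimate $c_{\bar d,\bar w}\le 1/(\inf_k|w_k|-\sup_k|d_k|)$, and condition \eqref{Eq_Dense Kernels Sufficient Condition} follows from the polynomial-times-geometric bound on $\sum_m |Q_n(m-n)|\,\|\bm v_m\|_p$ via $\|\bm v_m\|_p\le c_{\bar d,\bar w}^{m-1}\|\bm v_1\|_p$. Your identity $\bm v_1=\bm e_1-S_{\bar d,\bar w}(d_1\bm e_1)$ is a useful addition: it shows $\bm v_1\in\ell^p(\N)$, and hence \eqref{Eq_Bounded Inverse Vector}, from $c_{\bar d,\bar w}<\infty$ alone, which the paper uses without comment.
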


\begin{proof}
Assume that $c_{\bar{d}, \bar{w}} < +\oo$. Fix $\lc x_{k} \rc_{k =1}^{+\oo} \in \ell^{p}\lc \N \rc$. To prove that $S_{\bar{d}, \bar{w}} \lc \ell^{p}\lc \N \rc \rc  \sub \ell^{p}\lc \N \rc$, it suffices to show that $\ldav S_{\bar{d}, \bar{w}} \lc x_{k} \rc_{k=1}^{+\oo} \rdav_{p} \le c_{\bar{d}, \bar{w}} \ldav \lc x_{k} \rc_{k=1}^{+\oo} \rdav_{p}$. To this end, the last inequality follows by an immediate application of triangle inequality: 
$$  
\lav\lav S_{\bar d,\bar w }(x_k)_{k=1}^\infty\rav\rav_{p} 
\underset{ \eqref{Eq_Inverse Operator Definition}}{\le} 
\ldav \lc \frac{x_{k}}{w_{k}}\rc_{k=1}^{+\oo}\rdav_{p} + \sum_{l=1}^{+\oo}\ldav \lc \underbrace{0,\ldots,0}_{l} , \bm{y}\lc l \rc \rc \rdav_{p} \underset{ \eqref{Eq_Cor Inverse Operator Norm Bound}}{\le}  c_{\bar d,\bar w}|| \lc x_{k} \rc_{k=1}^{+\infty}|| ,  $$
where, for each $l \ge 1$,  $\bm{y}\lc l \rc = \lc  \prod_{j=1}^{l} \lc d_{k+j} / w_{k+j }\rc \; x_{k} / w_{k} 
\rc_{k=1}^{+\oo}$.

Finally, on the one hand, if  $ 2 \sup_{k \ge 1}  \lav d_{k} \rav < \inf_{k \ge 1}  \lav w_{k} \rav$, then it is readily verified that $c_{\bar{d}, \bar{w}} < 1/ \ldav I_{\bar{d}} \rdav$. On the other hand, if $c_{\bar{d}, \bar{w}} < 1/ \ldav I_{\bar{d}} \rdav$, then 
$$  
\sum_{m=1}^{\infty} \lav Q_n(m-n)\rav \ldav \bm{v}_{m} \rdav_{p}  \le ||\bm v_1||c_{\bar{d},\bar{w}}^{n-1}\sum_{m=n}^{+\infty}\dfrac{m^{n-1}}{(n-1)!}\lc c_{\bar d, \bar w}||I_d||\rc^{(m-n)}<+\infty, $$
where the last sum is finite since the series decays exponentially. Therefore, the series in \eqref{Eq_Dense Kernels Sufficient Condition} converges absolutely and thus unconditionally. This completes the proof.     
\end{proof}

\begin{proof}[Proof of Corollary \ref{Cor_Perturbed Weighted Shift Sufficient Condition}]

Let $c_{\bar{d}, \bar{w}} $ be the quantity defined in \eqref{Eq_Cor Inverse Operator Norm Bound}. Assume that condition \eqref{Eq_Cor Perturbed Weighted Shift Sufficient Condition} holds. In view of Theorem \ref{Theor_H and FH for Perturbed Weighted Shifts}, to show that the operator $T_{\bar{d}, \bar{w}}$ is frequently hypercyclic, it suffices to show that the conditions \eqref{Eq_Bounded Inverse Vector}, \eqref{Eq_Dense Kernels Sufficient Condition}, and \eqref{Eq_Freq Hypercyclic Sufficient Condition Series Convergence} are satisfied. 
To this end, since $ \ldav S_{\bar{d}, \bar{w}} \rdav \le c_{\bar d, \bar w}< 1/ \ldav I_{\bar d} \rdav$, Lemma \ref{Lem_Norm of S} guarantees that condition \eqref{Eq_Dense Kernels Sufficient Condition} holds. 
To verify \eqref{Eq_Bounded Inverse Vector} and \eqref{Eq_Freq Hypercyclic Sufficient Condition Series Convergence}, notice that \eqref{Eq_Cor Perturbed Weighted Shift Sufficient Condition} immediately implies $\ldav \bm{v}_{1} \rdav_{p} < +\oo$. In turn, since $\ldav S_{\bar{d}, \bar{w}} \rdav_{p} \le c_{\bar{d}, \bar{w}} < 1$, one has 
$$ \sum_{m=1}^{+\oo} \ldav \bm{v}_{m} \rdav_{p}  \underset{ \eqref{Eq_Cor Perturbed Weighted Shift Sufficient Condition}, \eqref{Eq_Inverse Vectors and Inverse Operator} }{\le}  { 1 \over 1 - c_{\bar{d}, \bar{w}} } \ldav \bm{v}_{1} \rdav_{p} < +\oo . $$
In other words, the series of the vectors $\bm{v}_{m}$, $m \ge 1$, converges absolutely and, therefore, both conditions \eqref{Eq_Bounded Inverse Vector} and \eqref{Eq_Freq Hypercyclic Sufficient Condition Series Convergence} are satisfied.

\smallskip 

Finally, it remains to establish the special case that condition~\eqref{Eq_Cor Perturbed Weighted Shift Strong Sufficient Condition} implies \eqref{Eq_Cor Perturbed Weighted Shift Sufficient Condition}. Since
$2\sup_{k\ge1}\lav d_k\rav<\inf_{k\ge1} \lav w_k\rav$, Lemma~\ref{Lem_Norm of S} yields
$c_{\bar d,\bar w}<1/\ldav I_{\bar d}\rdav$. 
Moreover, since
$1+\sup_{k\ge1}\lav d_k\rav<\inf_{k\ge1}\lav w_k\rav$, it follows that
$$
c_{\bar d,\bar w}
\underset{ \eqref{Eq_Cor Inverse Operator Norm Bound} }{\le} \dfrac{1}{ \inf_{k \ge 1} \lav w_{k} \rav }\lc 1+  \sum_{l=1}^{\infty} \lc { \sup_{k\ge 1} \lav d_{k} \rav \over 1 + \sup_{k \ge 1}\lav d_{k} \rav} \rc^{l} \rc  
\underset{ }{\le}  1 .
$$

The proof is complete. 
\end{proof}

\section{Proof of Theorem~\ref{Theor_Shadowing Petrurbed Weighted Shifts}}\label{section 5}

First, the claim made in Remark~\ref{Rem_S1-Complemented} is established: every perturbed weighted shift $T_{\bar d,\bar w}$ on $\ell^{p}\lc\N\rc$ is $\SP^{1}$-complemented. To verify condition~\eqref{Eq_Closed Complement}, it suffices to note that every closed finite-dimensional subspace of a Banach space admits a closed complement and that, for every $\lambda\in\C$, the estimate
$\dim\ker(T_{\bar d,\bar w}-\lambda I)\le1$
holds.

\paragraph{}

To prove Theorem~\ref{Theor_Shadowing Petrurbed Weighted Shifts}, it suffices to show that
\begin{equation}\label{Eq_Shadowing Spectral-Analytic Conditions}
\text{for all } \lambda \in \SP^{1}, \quad
\lambda \not\in \sigma_{sur}\lc T_{\bar{d}, \bar{w}}\rc
\quad\text{if and only if}\quad
1\not\in\lb r_{inf}\lc\lambda\rc,r_{sup}\lc\lambda\rc\rb,
\end{equation}
where the quantities $r_{inf}\lc\lambda\rc$ and $r_{sup}\lc\lambda\rc$ are defined in \eqref{Eq_Shadowing Inf Bound} and \eqref{Eq_Shadowing Sup Bound}, respectively.
Indeed, a recent result of Dragičević and Pituk \cite[Theorem~2.3]{Dragicevic_Pituk-Duality_between_Shadowing_and_Uniform_Expansivity_in_Linear_Dynamics} guarantees that a bounded operator $T$ on a Banach space $\mc{B}$ has the shadowing property if and only if $\sigma_{sur}(T)\cap\SP^{1}=\emptyset$.

Notice that, for every $\lambda\in\C$, the operator $T_{\bar{d}, \bar{w}}-\lambda I$ is also a perturbed weighted shift. Thus, in view of \eqref{Eq_Shadowing Spectral-Analytic Conditions}, the proof of Theorem~\ref{Theor_Shadowing Petrurbed Weighted Shifts} reduces to determining necessary and sufficient conditions for a perturbed weighted shift to be surjective. To this end, the following Lemma provides a sufficient condition for the existence of a right inverse
$S_{\bar{d}, \bar{w}}:\ell^{p}\lc\N\rc\to\ell^{p}\lc\N\rc$
of $T_{\bar{d}, \bar{w}}$, whose formula is given by \eqref{Eq_Inverse Operator Definition}, as well as a necessary and sufficient condition for $S_{\bar{d}, \bar{w}}$ to be bounded.

\begin{lemma}\label{Lem_Right Inverse Existence}
The operator $S_{\bar d,\bar w} : \ell^{p}\lc \N \rc \to \ell^{p}\lc \N \rc$, given by the formula \eqref{Eq_Inverse Operator Definition}, is well defined and bounded if and only if
\begin{equation}\label{Eq_Right Inverse Bounded Necessary and Sufficient Condition}
\lim_{l \to \infty}\sup_{k\ge 0}
\dfrac{
\lav d_{k+1} \cdots d_{k+l}\rav^{\frac{1}{l}}
}{
\lav w_{k+1} \cdots w_{k+l}\rav^{\frac{1}{l}}
}<1 \cdot 
\end{equation}
Moreover, if $T_{\bar d,\bar w}$ is surjective and non-injective, then condition~\eqref{Eq_Right Inverse Bounded Necessary and Sufficient Condition} holds.
\end{lemma}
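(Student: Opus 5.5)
Writing $S=S_{\bar d,\bar w}$, I will read $S$ as a lower-triangular infinite matrix and reduce boundedness to summability of its diagonals. From \eqref{Eq_Inverse Operator Definition}, $S=\sum_{l\ge0}S_l$, where $S_0 x=(0,x_1/w_1,x_2/w_2,\dots)$. For $l\ge1$, $S_l$ is the weighted forward shift by $l+1$ places whose weight on $x_k$ is
$$a_{k,l}:=(-1)^{l}\frac{d_{k+1}\cdots d_{k+l}}{w_k w_{k+1}\cdots w_{k+l}}.$$
On $\ell^p$ each $S_l$ has norm exactly $\sup_k|a_{k,l}|$. Set
$$\alpha_l=\sup_{k\ge0}\frac{|d_{k+1}\cdots d_{k+l}|}{|w_{k+1}\cdots w_{k+l}|}.$$
This sequence is submultiplicative, so by Fekete's lemma $\lim_l\alpha_l^{1/l}$ exists and equals $\inf_l\alpha_l^{1/l}$. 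Since $0<\inf|w_k|\le\sup|w_k|<\infty$, the quantities $\sup_k|a_{k,l}|$ and $\alpha_l$ are comparable up to the factors $\sup_k 1/|w_k|$ and $\inf_k 1/|w_k|$. Hence \eqref{Eq_Right Inverse Bounded Necessary and Sufficient Condition} is equivalent to $\limsup_l\|S_l\|^{1/l}<1$.

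\emph{Sufficiency.} If the limit is $<1$, then $\|S_l\|\le C\rho^l$ for some $\rho<1$. Exactly as in the proof of Lemma~\ref{Lem_Norm of S}, the triangle inequality gives $\|S\|\le\sum_l\|S_l\|<\infty$, so $S$ is a well-defined bounded operator on $\ell^p$.

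\emph{Necessity.} Assume $S$ is bounded. The matrix entry of $S$ in row $k+l+1$, column $k$, is $a_{k,l}$, so $|a_{k,l}|=|(S\bm e_k)_{k+l+1}|\le\|S\|$. That alone only gives $\alpha_l$ bounded. To get geometric decay I plan to use the structure $TS=I$ on finitely supported vectors, which holds by the computation in Lemma~\ref{Lemma_Properties of the Inverse Operator}(3); that computation is coordinatewise and does not use $D_0$. It follows that $T\circ S=I$ on all of $\ell^p$, so $T$ is surjective. I then consider the vector $\bm v_1=\bm e_1-S(T\bm e_1)$ and more generally $\bm u_k=\bm e_k-ST\bm e_k$. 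These lie in $\ker T$, their coordinates are (up to sign) the products $\prod_{j=k}^{n} d_j/w_j$, and they satisfy $\|\bm u_k\|\le 1+\|S\|\|T\|$ uniformly in $k$. This gives, for every $k$,
$$\sum_{l}\Big|\frac{d_{k}\cdots d_{k+l}}{w_k\cdots w_{k+l}}\Big|^p\le C$$
with $C$ independent of $k$. From this uniform tail bound I will deduce exponential decay by the standard blocking argument used for weighted shifts (Bernardes--Messaoudi). Choose $N$ with $C N^{-1}<2^{-p}$. Then for each $k$ some $l\le N$ satisfies $|\pi(k,l)|\le 1/2$, where $\pi(k,l)$ is the product above. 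Chaining these blocks and using cocycle multiplicativity, together with the uniform bound on $\pi(k,l)$ for $l\le N$, gives $\alpha_l\le C'2^{-l/N}$. Hence the limit in \eqref{Eq_Right Inverse Bounded Necessary and Sufficient Condition} is at most $2^{-1/N}<1$.

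For the last assertion, suppose $T$ is surjective and non-injective. The kernel has dimension $\le1$, and any kernel vector satisfies $x_{k+1}=-d_kx_k/w_k$, so it is a multiple of $\bm v_1$; thus $\bm v_1\in\ell^p$. Moreover every shifted tail $(\bm v_1)_{k+\cdot}/(\bm v_1)_k$ must lie in $\ell^p$, and if some $d_j=0$ the relevant products are trivially handled. Surjectivity gives, by the open mapping theorem, a constant $K$ such that each $\bm e_k$ has a preimage of norm $\le K$. Every preimage of $\bm e_k$ has the form $\bm y_k+c\bm v_1$, and the one vanishing at coordinate $1$ is exactly $S\bm e_k$. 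Controlling $|c|$ requires $(\bm v_1)_1=1$ and a bound on the first coordinate of a small preimage; that coordinate is $\le K$, so $\|S\bm e_k\|\le K(1+\|\bm v_1\|)$. The same bound holds for finitely supported combinations, so $S$ is bounded and the necessity part applies.

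The main obstacle is the necessity direction: upgrading the uniform $\ell^p$ tail bounds to the strict inequality via the blocking argument, with care when some $d_j$ vanish (then the products are eventually zero and the conclusion is easy).
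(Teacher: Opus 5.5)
\textbf{Gap in the necessity direction.} The step that is supposed to give the uniform tail bound fails. The vectors $\mathbf u_k=\mathbf e_k-ST\mathbf e_k$ do not have the coordinates you claim.

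For every sequence $\mathbf x$, the identity $TS=I$ (which is coordinatewise) gives $T(ST\mathbf x-\mathbf x)=(TS)(T\mathbf x)-T\mathbf x=0$. So $ST\mathbf x-\mathbf x$ lies in the formal kernel of $T$, which is spanned by $\mathbf v_1$. Comparing first coordinates, using $(ST\mathbf x)_1=0$ and $(\mathbf v_1)_1=1$, gives $ST\mathbf x=\mathbf x-x_1\mathbf v_1$. Hence $\mathbf u_1=\mathbf v_1$ and $\mathbf u_k=0$ for every $k\ge2$.

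This is forced by what you yourself observe: all $\mathbf u_k$ lie in $\ker T$, which is at most one-dimensional, so they cannot have leading entry in position $k$ for every $k$. Consequently, the estimate $\|\mathbf u_k\|\le 1+\|S\|\|T\|$ only yields $\|\mathbf v_1\|_p<\infty$. That is the tail bound at the single index $k=1$, whereas your blocking argument needs it uniformly in $k$.

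\textbf{The fix.} The repair is one line, and it is exactly the paper's route. Instead of bounding the entries of $S\mathbf e_k$ one at a time (which you correctly note only gives $\alpha_l$ bounded), sum their $p$-th powers along the column. Since $(S\mathbf e_k)_{k+1}=1/w_k$ and $(S\mathbf e_k)_{k+l+1}=a_{k,l}$ for $l\ge1$,
\[
1+\sum_{l\ge1}\frac{|d_{k+1}\cdots d_{k+l}|^p}{|w_{k+1}\cdots w_{k+l}|^p}=|w_k|^p\,\|S\mathbf e_k\|_p^p\le\Bigl(\sup_{j\ge1}|w_j|\Bigr)^p\|S\|^p
\]
for every $k\ge1$. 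This is the uniform tail bound. Your blocking argument, which is the content of the Bernardes--Messaoudi lemma the paper cites, then yields the strict inequality.

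\textbf{The rest is sound.} Sufficiency is the paper's triangle-inequality estimate from Lemma~\ref{Lem_Norm of S}. Your treatment of the surjective, non-injective case is correct and slightly more elementary than the paper's. You pick, via the open mapping theorem, a preimage $\mathbf y$ of $\mathbf x$ with $\|\mathbf y\|\le K\|\mathbf x\|$ and use $S\mathbf x=\mathbf y-y_1\mathbf v_1$. The paper instead takes a bounded right inverse $R$ coming from the complemented one-dimensional kernel and writes $S\mathbf x=R\mathbf x-(R\mathbf x)_1\mathbf v_1$. Both then reduce to the necessity part, so they too depend on the fix above.
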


\noindent
The proof of Lemma~\ref{Lem_Right Inverse Existence} uses the following well-known result (see, for instance, \cite[Theorem~A.10]{Aiena_Fredholm_Local_Spectral_Theory} and \cite[Theorem~2.12]{Brezis_Functional_Analysis}): for a bounded surjective linear operator $T$ on a Banach space $\mc{B}$,
\begin{equation}\label{Eq_Right Inverse and Closed Complement}
T \text{ admits a bounded linear right inverse if and only if } \ker T \text{ admits a closed complement.}
\end{equation}

\begin{proof}[Proof of Lemma~\ref{Lem_Right Inverse Existence}]
First, assume that \eqref{Eq_Right Inverse Bounded Necessary and Sufficient Condition} holds. In this case, it follows that the constant $c_{\bar{d}, \bar{w}}$ defined in \eqref{Eq_Cor Inverse Operator Norm Bound} is finite. Hence, by Lemma~\ref{Lem_Norm of S}, the operator
$S_{\bar{d}, \bar{w}}: \ell^{p}\lc \N \rc \to \ell^{p}\lc \N \rc$
is well defined and satisfies
$\ldav S_{\bar{d}, \bar{w}} \rdav \le c_{\bar{d}, \bar{w}}$. 
Conversely, suppose that $\ldav S_{\bar d,\bar w}\rdav<\infty$. Then, for every $k\ge1$, the $k$-th vector $\bm{e}_{k}$ of the canonical basis satisfies
$$ 1 + \sum_{l=1}^{\infty}\dfrac{|d_{k+1}.\cdots d_{k+l}|^p}{|w_{k+1}.\cdots w_{k+l}|^p} \le |w_{k} |^p\cdot||S_{\bar d, \bar w}(e_k)||_p^p \le   \sup_{k \ge 1}|w_k |^p\cdot||S_{\bar d,\bar w}||^{p} .$$
Consequently, $ \sup_{k\ge1}
\lc
\sum_{l=1}^{\infty}
\dfrac{\lav d_{k+1}\cdots d_{k+l}\rav^p}
{\lav w_{k+1}\cdots w_{k+l}\rav^p}
\rc
<+\infty$. Hence, relation~\eqref{Eq_Right Inverse Bounded Necessary and Sufficient Condition} follows by a straightforward application of \cite[Lemma~18]{Bernardes_Messaoudi-Shadowing_and_Structural_Stability} to the bounded sequence $\lc \lav d_{k}/w_{k}\rav^p\rc_{k\ge1}$.

\medskip

Assume, for the rest of the proof, that $T_{\bar d,\bar w}$ is surjective and non-injective. By the first part of the Lemma, it suffices to show that the associated right inverse $S_{\bar d,\bar w}$ is bounded. Since
$\ker T_{\bar d,\bar w}=\operatorname{span}\langle\bm v_1\rangle$,
where $\bm v_1$ is defined in \eqref{Eq_Kernel Basis Vectors}, the kernel is one-dimensional and hence admits a closed complement. Thus, \eqref{Eq_Right Inverse and Closed Complement} guarantees the existence of a bounded right inverse $R$ of $T_{\bar d,\bar w}$. Since both $R$ and $S_{\bar d,\bar w}$ are right inverses,
$
R\lc\bm x\rc-S_{\bar d,\bar w}\lc\bm x\rc
\in\ker T_{\bar d,\bar w}
=
\operatorname{span}\langle\bm v_1\rangle$, $\bm x\in\ell^p\lc\N\rc$. 
Hence, there exists a linear functional $\varphi:\ell^p\lc\N\rc\to\C$ such that
$
R\lc\bm x\rc
=
S_{\bar d,\bar w}\lc\bm x\rc
+
\varphi\lc\bm x\rc\bm v_1$, $\bm x\in\ell^p\lc\N\rc$. 
Since the first coordinate of $S_{\bar d,\bar w}\lc\bm x\rc$ is zero, whereas that of $\bm v_1$ is nonzero, $\varphi$ is determined by the bounded functional
$\bm x\mapsto\lc R\lc\bm x\rc\rc_1$.
Thus, $\varphi$ is bounded and, since $R$ is bounded, so is $S_{\bar d,\bar w}$.

The proof is complete.
\end{proof}

\paragraph{}

The proof of Theorem~\ref{Theor_Shadowing Petrurbed Weighted Shifts} is given next. In what follows, $\ell^{\infty}(\mathbb N)$ denotes the Banach space of bounded complex sequences equipped with its usual supremum norm $\ldav\cdot\rdav_{\oo}$. Given $1\le q \le +\oo$ 
 and a bounded sequence of complex numbers $\bar{w} = \lc w_{k} \rc_{k=1}^{+\oo}$ satisfying $\inf_{k \ge 1}\lav w_{k} \rav \neq 0$, let $F_{\bar{w}} : \ell^{q}\lc \N \rc \to \ell^{q}\lc \N \rc $ be the \textit{unilateral forward weighted shift} given by 
\begin{equation}\label{Eq_Def_Forward Weighted Shift}   
F_{\bar w}(x_1,x_2,x_3,\ldots) = (0,w_1x_1,w_2x_2,w_3x_3,\ldots) .
\end{equation} 
The proof will make use of the following description of the approximate spectrum of $F_{\bar{w}}$ (see, for instance, \cite{Miller-Miller-Neumann_weighted_shifts}): 
\begin{equation}\label{Eq_Forward Shift Approximate Spectrum} \sigma_{\mathrm{ap}}(F_{\overline{w}}) = \left\{ \lambda\in\mathbb{C} :\, r_{-}(F_{\overline{w}})\le |\lambda|\le r_{+}(F_{\overline{w}}) \right\},  \end{equation}
where
\begin{equation}\label{Eq_Forward Shift Approximate Spectrum Auxiliary Quantities}
r_{-}(F_{\overline{w}}):=\lim_{l \to \infty} \inf_{k \ge 0}\bigl| w_{k+1}\cdots w_{k+l}\bigr|^{1/l} \; \text{ and } \;   r_{+}(F_{\overline{w}}):=\lim_{l \to \infty} \sup_{k \ge 0}\bigl| w_{k+1}\cdots w_{k+l} \bigr|^{1/l} . 
\end{equation}

\begin{proof}[Proof of Theorem \ref{Theor_Shadowing Petrurbed Weighted Shifts}]
The goal is to establish equivalence \eqref{Eq_Shadowing Spectral-Analytic Conditions}. 
Fix $\lambda \in \SP^{1}$. The proof proceeds by splitting into two cases.

\medskip 
In the first case, assume that
$\lambda\in\SP^1\cap\overline{\lfp d_k\rfp_{k\ge1}}$.
In this case, it follows that $r_{inf}\lc\lambda\rc=0$. Thus, to prove~\eqref{Eq_Shadowing Spectral-Analytic Conditions}, it suffices to show that $T_{\bar d,\bar w}-\lambda I$ is surjective if and only if $r_{sup}\lc\lambda\rc<1$.

To this end, consider the case in which  $T_{\bar d,\bar w}-\lambda I$ is surjective. Upon verifying that 
$\overline{ \lfp d_k \rfp_{k\ge1}}\subseteq\sigma(T_{\bar d,\bar w})$, one infers that the operator
$T_{\bar d,\bar w}-\lambda I$ is non-injective. Therefore, applying Lemma~\ref{Lem_Right Inverse Existence} to $\lc d_k-\lambda\rc_{k=1}^{+\oo}$ and $\lc w_k\rc_{k=1}^{+\oo}$, and using the definition of $r_{sup}\lc\lambda\rc$ in \eqref{Eq_Shadowing Sup Bound}, yields $ r_{sup}\lc\lambda\rc < 1$.
Conversely, suppose that $r_{sup}\lc\lambda\rc<1$. Then, by Lemma~\ref{Lem_Right Inverse Existence}, the right inverse operator $S_{\bar d_{\lambda},\bar w}$ of $T_{\bar d,\bar w}-\lambda I$, where
$\bar d_{\lambda}=\lc d_k-\lambda\rc_{k=1}^{+\oo}$,
is well defined and bounded. Therefore, the operator $T_{\bar d,\bar w}-\lambda I$ is surjective.

\medskip 

In the second case, assume that $\lambda\in\SP^{1}\backslash\overline{\lfp d_k\rfp_{k\ge1}}$. Let
$$
q=
\begin{cases} 
p/\lc p-1\rc, & \text{if } 1<p<+\oo, \\
+\oo, & \text{if } p=1,
\end{cases} 
$$
and denote by $T_{\bar d,\bar w}^{*}:\ell^{q}\lc\N\rc\to\ell^{q}\lc\N\rc$ the adjoint of $T_{\bar d,\bar w}$. By \cite[Theorem~1.10]{Aiena_Fredholm_Local_Spectral_Theory},
$\sigma_{sur}(T_{\bar d,\bar w})=\sigma_{ap}(T_{\bar d,\bar w}^{*})$.
Hence, to establish \eqref{Eq_Shadowing Spectral-Analytic Conditions}, it suffices to show that
\begin{equation}\label{Eq_Forward Shift Approximate Spectrum Analytic Condition} 
\lambda\not\in\sigma_{ap}\lc T_{\bar d,\bar w}^{*}\rc
\quad\text{if and only if}\quad 
1\not\in\lb r_{inf}\lc\lambda\rc,r_{sup}\lc\lambda\rc\rb 
.
\end{equation}

It is readily verified that $T_{\bar d,\bar w}^{*}=I_{\bar d}+F_{\bar w}$, where $F_{\bar w}$ is defined in \eqref{Eq_Def_Forward Weighted Shift}, and 
\begin{equation*}\label{Eq_Factorization of Adjoint Operator}
T_{\bar d,\bar w}^{*}-\lambda I
=
\lc F_{\bar u_\lambda}+I\rc\circ\lc I_{\bar d}-\lambda I\rc,
\qquad
\bar u_\lambda
=
\left(
\frac{w_k}{d_k-\lambda}
\right)_{k\ge1}.
\end{equation*}
Since $\lambda\notin\overline{\lfp d_k\rfp_{k\ge1}}$, the operator $I_{\bar d}-\lambda I$ is invertible, and hence $ \lambda\in\sigma_{ap}\lc T_{\bar d,\bar w}^{*}\rc$ if and only if $ -1\in\sigma_{ap}\lc F_{\bar u_\lambda}\rc$.
Together with \eqref{Eq_Forward Shift Approximate Spectrum} and the identities $r_{-}\lc F_{\bar u_\lambda}\rc
=
1/r_{sup}\lc\lambda\rc$ and $r_{+}\lc F_{\bar u_\lambda}\rc
= 1/ r_{inf}\lc\lambda\rc$, 
this yields \eqref{Eq_Forward Shift Approximate Spectrum Analytic Condition}. The last two identities follow directly from \eqref{Eq_Shadowing Inf Bound}, \eqref{Eq_Shadowing Sup Bound}, and \eqref{Eq_Forward Shift Approximate Spectrum Auxiliary Quantities}.

The proof of the theorem is complete. 
\end{proof}

\paragraph{Acknowledgments} 
\thanks{Ali Messaoudi was partially supported by CNPq grant 310784/2021-2 and by FAPESP projects  2024/15612-6 and 2024/04685-2. 
José Luiz Tofanin Neto was supported by CAPES grant 88887.888179/2023-00. 
Ioannis Tsokanos was supported by FAPESP grant 2024/10135-5. }

\par\bigskip
\begingroup
\footnotesize

\par\bigskip
\begingroup
\footnotesize

\indent\textsc{Departamento de Matemática, Universidade Estadual Paulista,
Rua Cristóvão Colombo, 2265, Jardim Nazareth,
São José do Rio Preto, SP, 15054-000, Brasil.}\par
\indent\textit{E-mail address:} \texttt{ali.messaoudi@unesp.br}\par

\indent\textsc{Departamento de Matemática, Universidade Estadual Paulista,
Rua Cristóvão Colombo, 2265, Jardim Nazareth,
São José do Rio Preto, SP, 15054-000, Brasil.}\par
\indent\textit{E-mail address:} \texttt{jose.tofanin@unesp.br}\par

\indent\textsc{Departamento de Matemática, Universidade Estadual Paulista,
Rua Cristóvão Colombo, 2265, Jardim Nazareth,
São José do Rio Preto, SP, 15054-000, Brasil.}\par
\indent\textit{E-mail address:} \texttt{ioannis.tsokanos@unesp.br}\par

\endgroup


\begin{thebibliography}{99}

\bibitem{Aiena_Fredholm_Local_Spectral_Theory}
P.~Aiena,
\textit{Fredholm and Local Spectral Theory II: With Application to Weyl-Type Theorems},
Lecture Notes in Mathematics, vol.~2235, Springer, Cham, 2018.

\bibitem{Antunes}
M.~B. Antunes, G.~E. Mantovani and R. Var\~ao,
Chain recurrence and positive shadowing in linear dynamics,
\textit{J. Math. Anal. Appl.} \textbf{506} (2022), no.~1, 125622.

\bibitem{Bayart_Grivaux_2006}
F. Bayart and S. Grivaux,
Frequently hypercyclic operators,
\textit{Trans. Amer. Math. Soc.} \textbf{358} (2006), no.~11, 5083--5117.

\bibitem{Bayart-Matheron-Dynamics_Of_Linear_Operators}
F. Bayart and É. Matheron,
\textit{Dynamics of Linear Operators},
Cambridge Tracts in Mathematics, vol.~179, Cambridge University Press, Cambridge, 2009.

\bibitem{Bayart-Ruzsa_difference_sets}
F. Bayart and I.~Z. Ruzsa,
Difference sets and frequently hypercyclic weighted shifts,
\textit{Ergodic Theory Dynam. Systems} \textbf{35} (2015), no.~3, 691--709.


\bibitem{BM}
N.~C. Bernardes Jr. and A. Messaoudi,
Shadowing and structural stability for operators,
\textit{Ergodic Theory Dynam. Systems} \textbf{41} (2021), no.~4, 961--980.

\bibitem{Bernardes_Messaoudi-Shadowing_and_Structural_Stability}
N.~C. Bernardes Jr. and A. Messaoudi,
Shadowing and structural stability in linear dynamical systems,
arXiv:1902.04386 [math.DS], (2019).

\bibitem{Bernardes_Peris-On_Shadowing_and_Chain_Recurrence_in_Linear_Dynamics}
N.~C. Bernardes Jr. and A. Peris,
On shadowing and chain recurrence in linear dynamics,
\textit{Adv. Math.} \textbf{441} (2024), 109539.

\bibitem{Bourhim-Spectrum_of_Bilateral_Shifts}
A. Bourhim,
Spectrum of bilateral shifts with operator-valued weights,
\textit{Proc. Amer. Math. Soc.} \textbf{134} (2006), 2131--2137.

\bibitem{Brezis_Functional_Analysis}
H.~Brezis,
\textit{Functional Analysis, Sobolev Spaces and Partial Differential Equations},
Universitext, Springer, New York, 2011.

\bibitem{Cirilo_Gollobit_Pujals-Dynamics_of_Generalized_Hyperbolic_Linear_Operators}
P.~R. Cirilo, B. Gollobit and E. Pujals,
Dynamics of generalized hyperbolic linear operators,
\textit{Adv. Math.} \textbf{387} (2021), 107830.

\bibitem{Dragicevic_Pituk-Duality_between_Shadowing_and_Uniform_Expansivity_in_Linear_Dynamics}
D. Dragičević and M. Pituk,
Duality between shadowing and uniform expansivity in linear dynamics,
to appear in \textit{Trans. Amer. Math. Soc.}

\bibitem{Grivaux-Sums_of_hypercyclic_operators}
S. Grivaux,
Sums of hypercyclic operators,
\textit{J. Funct. Anal.} \textbf{202} (2003), no.~2, 486--503.

\bibitem{Gr2}
S. Grivaux,
Hypercyclic operators, mixing operators, and the bounded steps problem,
\textit{J. Operator Theory} \textbf{54} (2005), no.~1, 147--168.

\bibitem{Grivaux_Matheron_Menet-Linear_Dynamical_Systems_on_Hilbert_Spaces}
S. Grivaux, É. Matheron and Q. Menet,
\textit{Linear Dynamical Systems on Hilbert Spaces: Typical Properties and Explicit Examples},
Mem. Amer. Math. Soc., vol.~269, no.~1315, 2021, 147~pp.

\bibitem{GE}
K.-G. Grosse-Erdmann,
Hypercyclic and chaotic weighted shifts,
\textit{Studia Math.} \textbf{139} (2000), no.~1, 47--68.

\bibitem{Hilden_Wallen-Some_Cyclic_and_Non-Cyclic_Vectors_of_Certain_Operators}
H.~M. Hilden and L.~J. Wallen,
Some cyclic and non-cyclic vectors of certain operators,
\textit{Indiana Univ. Math. J.} \textbf{23} (1974), no.~7, 557--565.

\bibitem{Kitai_invariant_closed_sets}
C.~Kitai,
\textit{Invariant Closed Sets for Linear Operators},
Ph.D. thesis, University of Toronto, Toronto, 1982.

\bibitem{Macdonald-Symmetric_Functions_and_Hall_Polynomials} I.~G. Macdonald, \textit{Symmetric Functions and Hall Polynomials}, 2nd ed., Oxford Mathematical Monographs, Oxford University Press, Oxford, 1995.

\bibitem{Megginson-An_Introduction_to_Banach_Space_Theory}
R.~E. Megginson,
\textit{An Introduction to Banach Space Theory},
Graduate Texts in Mathematics, vol.~183, Springer-Verlag, New York, 1998.

\bibitem{Messaoudi_Neto_Saavedra_Tsokanos-On_GH_S_SH_for_Linear_Operators}
A. Messaoudi, J.~T. Neto, M. Saavedra and I. Tsokanos,
On generalized hyperbolicity, stability, and shadowing for linear operators,
arXiv:2608.17021 [math.DS], (2026).

\bibitem{Miller-Miller-Neumann_weighted_shifts}
T.~L. Miller, V.~G. Miller and M.~M. Neumann,
Local spectral properties of weighted shifts,
\textit{J. Operator Theory} \textbf{51} (2004), 71--88.


\bibitem{SR}
F.~L. Saavedra and A.~M. Rodriguez,
Linear structure of hypercyclic vectors,
\textit{J. Funct. Anal.} \textbf{148} (1997), 524--545.

\bibitem{Salas-Hypercyclic_Weighted_Shifts}
H.~N. Salas,
Hypercyclic weighted shifts,
\textit{Trans. Amer. Math. Soc.} \textbf{347} (1995), no.~3, 993--1004.

\bibitem{Salas-Supercyclicity_and_Weighted_Shifts}
H.~N. Salas,
Supercyclicity and weighted shifts,
\textit{Studia Math.} \textbf{135} (1999), 55--74.

\end{thebibliography}
\end{document}